\def\@fnsymbol#1{\ensuremath{\ifcase#1\or \dagger\or *\or \ddagger\or
     \mathsection\or \mathparagraph\or \|\or **\or \dagger\dagger
     \or \ddager\ddager \else\@cterr\fi}}
\newcommand{\bzz}{{\boldsymbol{\zeta}}}
\newcommand{\Ltwo}[1]{%
	\ifthenelse{\equal{#1}{}}{L^2}{L^2(#1)}%
}
\newcommand{\Ltwoz}[1]{%
	\ifthenelse{\equal{#1}{}}{L^2_0}{L^2_0(#1)}%
}
\newcommand{\Cone}[1]{%
	\ifthenelse{\equal{#1}{}}{C^{1}}{C^{1}(#1)}%
}
\newcommand{\Conez}[1]{%
	\ifthenelse{\equal{#1}{}}{C^{1}_{0}}{C^{1}_{0}(#1)}%
}
\newcommand{\Ctwo}[1]{%
	\ifthenelse{\equal{#1}{}}{C^{2}}{C^2(#1)}%
}
\newcommand{\Ctwoz}[1]{%
	\ifthenelse{\equal{#1}{}}{C^{2}_{0}}{C^{2}_{0}(#1)}%
}
\newcommand{\Cholder}[1]{%
	\ifthenelse{\equal{#1}{}}{C^{0,\gamma}}{C^{0,\gamma}(#1)}%
}
\newcommand{\Cholderz}[1]{%
	\ifthenelse{\equal{#1}{}}{C^{0,\gamma}_{0}}{C^{0,\gamma}_{0}(#1)}%
}
\newcommand{\bolds}[1]{\boldsymbol{#1}}
\newcommand{\bb}{\bolds{b}}
\newcommand{\be}{\bolds{e}}
\newcommand{\bg}{\bolds{g}}
\newcommand{\bu}{\bolds{u}}
\newcommand{\bv}{\bolds{v}}
\newcommand{\bx}{\bolds{x}}
\newcommand{\by}{\bolds{y}}
\newcommand{\brx}{{\bar{\bx}}}
\newcommand{\bry}{{\bar{\by}}}
\newcommand{\breps}{{\bar{\epsilon}}}
\newcommand{\brmcL}{{\bar{\mathcal{L}}}}
\newcommand{\bru}{{\bar{\bu}}}
\newcommand{\brt}{{\bar{t}}}
\newcommand{\brr}{{\bar{r}}}
\newcommand{\brS}{{\bar{S}}}
\newcommand{\brC}{{\bar{C}}}
\newcommand{\brJ}{{\bar{J}}}
\newcommand{\brT}{{\bar{T}}}
\newcommand{\brbeta}{{\bar{\beta}}}
\newcommand{\brrho}{{\bar{\rho}}}
\newcommand{\sautoref}[2]{\hyperref[#2]{#1 \ref*{#2}}}
\newtheorem{theorem}{Theorem}[section]
\newtheorem{lemma}{Lemma}[section]
\newtheorem{definition}{Definition}[section]
\numberwithin{equation}{section}
\newcommand{\ee}{\mathbf{e}}
\newcommand{\xx}{\mathbf{x}}
\newcommand{\yy}{\mathbf{y}}
\newcommand{\nn}{\mathbf{n}}
\newcommand{\uu}{\mathbf{u}}
\newcommand{\vv}{\mathbf{v}}
\newcommand{\ww}{\mathbf{w}}
\newcommand{\zz}{\mathbf{z}}
\newcommand{\LL}{\mathcal{L}}
\title{Energy balance and damage for dynamic brittle fracture from a nonlocal formulation}
\author{Robert  P. Lipton  \thanks{Department of Mathematics, Louisiana State University,
              Baton Rouge, LA 70803,
              Orcid: https://orcid.org/0000-0002-1382-3204,
              \tt{lipton@lsu.edu}}
\and Debdeep Bhattacharya \thanks{Department of Mathematics, University of Utah,
	      Salt Lake City, UT 84112,
	      Orcid:  https://orcid.org/0000-0001-5171-5506, 
	      \tt{ d.Bhattacharya@utah.edu}}
 }
\date{}
\begin{document}

\maketitle

\begin{abstract}
A  nonlocal model of peridynamic type for dynamic brittle damage  is introduced consisting of two phases, one elastic and the other inelastic.  Evolution from the elastic to the inelastic phase depends on material strength.  Existence and uniqueness of the displacement-failure set  pair  follow from  the initial value problem.  The displacement-failure pair satisfies energy balance. The length of nonlocality $\epsilon$ is taken to be small relative to the domain in $\mathbb{R}^d$, $d=2,3$. The new nonlocal model delivers a two point strain evolution on a subset of $\mathbb{R}^d\times\mathbb{R}^d$. This evolution provides an energy that interpolates between volume energy corresponding to elastic behavior and surface  energy corresponding to failure. In general the deformation energy resulting in material failure over a region $R$ is  given by a $d-1$ dimensional integral that is uniformly bounded as $\epsilon\rightarrow 0$.  For fixed $\epsilon$, the failure energy is nonzero for $d-1$ dimensional regions $R$ associated with flat crack surfaces.   This failure energy is the Griffith fracture energy  given by the energy release rate  multiplied by  area for $d=3$ (or length for $d=2$).  The nonlocal field theory is shown to recover a solution of Naiver's equation outside a propagating flat traction free crack  in  the limit of vanishing spatial nonlocality.  
 Simulations illustrate fracture evolution through generation of an internal traction free boundary  as a wake left behind a moving strain concentration. Crack paths are seen to follow a maximal strain energy density criterion.
\end{abstract}

\section{Introduction}

Peridynamic models
implicitly couple the evolution of damage and deformation inside a material specimen through a nonlocal formulation using force interaction between neighboring points \cite{silling2000reformulation,silling2007peridynamic}.  They provide for the spontaneous emergence and growth of fissures as part of the dynamic simulation.  The time evolution of peridynamic simulations are driven by temporally and spatially nonlocal forces. This idea  has been adapted and expanded  and the literature is now very large, see  for example, the contributions \cite{sillingaskari2005, BobaruZhang, Foster, Baselevs,Zaccariotto2015,hu2016bond2,jafarzadeh2021general,hu2018thermomechanical,BhattacharyaDayal2006,xu2018elastic, Otokirkus, DuTaoTian, Seleson},  books and reviews \cite{bobaru2016handbook,javili2019peridynamics,isiet2021review,diehl2022comparative}.

In this article we rigorously pursue the free discontinuity problem for fracture mechanics  and propose a new field theory that demonstrably preserves energy balance to discover new advantages to the nonlocal approach.
The field theory describes a material undergoing irreversible damage guaranteeing existence of solution, energy balance and an explicit formula for the energy necessary for material failure. For flat cracks the formula is the classic Griffith fracture energy given by the product of the crack area and  the critical energy release rate.  Both of these results are shown to follow directly from the evolution equation for the deformation multiplied by the velocity and integration by parts. This provides for the first time,  energy balance and a  direct relationship between Griffith fracture energy and nonlocal formulations of dynamic fracture with irrevocable damage. In the limit of vanishing non-locality, the nonlocal field theory is shown to recover  the elastic displacement field as a solution of the linear elastic wave equation outside a propagating traction free crack.
The new nonlocal formulation draws motivation from  \cite{silling2000reformulation},  the existence theory of \cite{EmmrichPhust,DuTaoTian,lipton2016cohesive} and the rate form of energy balance introduced in \cite{lipton2018free}. 

A  small deformation model for brittle failure under tensile loading is developed. Forces between pairs of points in $\Omega$  are referred to as bonds. Bond forces depend on the strain given by the deformation between nearby points. The strain is referred to as two point strain since it depends on the displacement  at two different  points. The model is nonlinear and the force between pairs of points act elastically against compressive strain and for moderate tensile strain the force is linear elastic. As one continues to increase tensile strain it becomes nonlinear elastic and at a critical strain the force becomes unstable and softens with increasing strain. The force eventually goes to zero with increasing strain and the bond between points breaks.
This process is irreversible and the bonds once broken do not heal. 
In this model the maximum length scale of nonlocal interaction  is  both finite and small relative to the size of the domain and is denoted by $\epsilon$.  The failure set $\Gamma^\epsilon(t)$ is the set of pairs of points with broken bonds in $\Omega$ at time $t$,  see Section \ref{sec:nonlocal:model}.  The bond force is coupled to a two point phase field that is a function of the two point strain. The phase field is $1$ in the undamaged material and $0$ in the fully damaged material. The coupling between bond force and phase field is given by their product. When the phase field becomes zero the bond is fully damaged and this state is irreversible.

This level of generalization together with Newton's second law given by the evolution equation  couples elastic forces and failure allowing failure sets and deformation to emerge from the dynamics. 
In addition to existence, the model provides  energy balance. The rate form of energy balance is shown to follow directly from the evolution equation for the deformation multiplied by the velocity and integration by parts. The rate form of 
energy balance shows that damage must start occurring when the energy input to the system exceeds the material's ability to generate kinetic and elastic energy through displacement and velocity, see Section \ref{energybalanceprocesszone}. 
The  energy expended up to time $t$ resulting in material failure over a region $\Gamma^\epsilon(t)$ is given by a bounded $d-1$ dimensional geometric integral of the failure set.  Application of Gronwall's inequality shows that the geometric integral is bounded uniformly in $\epsilon$ for initial and  boundary conditions that are independent of $\epsilon$. 

As an example fix $\epsilon>0$ and consider the failure set $\Gamma^\epsilon(t)$ defined by a flat two dimensional piece of surface $R_t$  where points  above the surface are no longer influenced by forces due to points below the surface and vice versa. This  is the case of alignment, i.e., all bonds connecting points $\yy$  above  $R_t$ to points $\xx$ below are broken. Calculation of the  failure energy of $\Gamma^\epsilon(t)$  shows that   it is  the product of the critical energy release rate of fracture mechanics multiplied by the two dimensional surface measure of $R_t$, see Section \ref{Flatcrack}.  The surface $R_t$ defines an internal boundary to domain $\Omega$ and  the crack is unambiguously described as the internal boundary.  Displacement jumps can only occur across $R_t$ and traction forces are zero on either side of $R_t$. Additionally, the analysis of Section \ref{Flatcrack} shows that material failure is associated with a maximum energy dissipation condition on each bond. 

The example given above shows that the failure energy corresponds to Griffith fracture energy for flat cracks. 
This demonstrates that the failure energy is bounded and can be nonzero on $d-1$ dimensional sets corresponding to cracks.  The failure energy for rectifiable curved smooth cracks or kinked cracks can be recovered in the limit $\epsilon\rightarrow 0$ as a  product of Griffith's energy release rate multiplied by the surface area (or its generalization given by  $d-1$ dimensional Hausdorff measure) see Section \ref{Flatcrack}. 

The geometry of the failure set is controlled by how it grows dynamically. Growth is governed by the rate of work done against boundary forces and the dynamic interaction between elastic displacement  and bond failure. Although interaction is captured implicitly through the evolution equations,
one can now apply the rate form of energy balance to explicitly deliver the time rate of the damage energy and characterize the location of the region undergoing damage. This region is called the process zone $PZ^\epsilon(t)$ and from the constitutive law, corresponds to the regions of highest strain. The damage rate and process zone are  determined by  the displacement  field through the rate of work done by the load and the change in both the kinetic energy and elastic potential energy of the specimen. The rate form of energy balance also dictates the onset of crack nucleation and propagation. The rate form of balance and its ramifications are introduced in Section \ref{energybalanceprocesszone}. For a flat mode-I crack in a plate the strain is greatest in a neighborhood of  the tips and this the location of the process zone. For this case the energy balance is in terms of Griffith fracture energy, see Section \ref{Flatcrack}.
On the other hand, away from damaging zones, it is shown that the model delivers the energy  density  associated with isotropic linear elasticity.  Explicit formulas for the Lam\'e constants in terms of the force potentials are obtained, see Section \ref{quescint}. In this way, it is seen  that the energy for this model is given by  the surface energy over failure zones and a volume energy associated with linear elastic behavior inside quiescent zones.  

In Section \ref{vanish} we consider a sequence of nonlocal initial value problems for a crack propagating from left to right, parameterized by $\epsilon$, and pass to the limit of vanishing horizon.  It is shown that the limit displacement field is a solution of the linear elastic wave equation outside a propagating traction free crack. Moreover, the same elastic constants derived in Section \ref{quescint}  appear in the limit of vanishing horizon providing self-consistency for the model. 

We depart from earlier bond breaking approaches of  \cite{EmmrichPhust} and \cite{DuTaoTian}  and add a $\epsilon^{-(d+1)}$ scaling to the bond force and insert a factor of $\sqrt{|\yy-\xx|}$  into the bond force and  bond breaking criteria, see Section \ref{sec:nonlocal:model}.   This change produces the desired model that is linear elastic in quiescent regions away from the damage set but nonlinear elastic with larger strain in the neighborhood of bond breaking. Strikingly, the scaling allows the failure energy to be nonzero and  strictly positive for flat  $d-1$ dimensional failure sets corresponding to creation of internal boundaries i.e., cracks.
Previous work addresses the same constitutive law treated in this paper within a discrete quasistatic model  \cite{BhattacharyaLipton}. Earlier work treats the same strain versus force law but without the presence of a damage factor, that approach is done in the context of tension loading \cite{lipton2014dynamic,lipton2016cohesive,liptonjha2021,lipton2019complex}.  

We note all results are presented in terms of the non-dimensional model. In Section \ref{nond} we rescale and present the associated model in units having the dimensions $M=$ Mass, $L=$ Length, and $T=$ Time.  This enables calibration of the nonlocal model with dimensions to properties of a material with given Lam\'e moduli,  critical energy release rate, and strength. In this paper we focus on central force models to understand scaling necessary for energy balance. Future work is directed toward non-central force models as embodied in the state based peridynamic models \cite{silling2007peridynamic}.

We close the paper with numerical simulations to illustrate crack propagation featuring the alignment of broken bonds  behind the propagating strain concentration. Simulation clearly shows maximum strain energy dissipation \cite{rozen2020fast} as a crack path selection mechanism for a bifurcating crack, see Section \ref{sec:simulations}.


\section{A new formulation}
\label{sec:nonlocal:model}

In what follows the equations of dynamics are expressed in dimensionally reduced form. All quantities including force, time, displacement, and  location of points inside the domain are dimensionally reduced.  The relationship between the dynamic equation in its dimensional and dimensionally reduced  form is presented in Section  \ref{nond}.
The body containing the damaging material $\Omega$ is a bounded domain in two or three dimensions. Nonlocal interactions between a point in the body $\xx$ and its neighbors $\yy$ are confined to the sphere (disk) of radius $\epsilon$ denoted by $H_\epsilon(\xx)=\{\yy:\,|\yy-\xx|<\epsilon\}$. Here $V^\epsilon_d=\omega_d\epsilon^d$ is  the $d$ dimensional volume of the ball $H_\epsilon(\xx)$ centered at $\xx$ where $\omega_d$ is the volume of the $d$ dimensional unit ball.  The elastic displacement $\uu(t,\xx)$ is defined for $0\leq t\leq T$ and $\xx$ in $\Omega$. We write $\uu(t)=\uu(t,\cdot)$ and  introduce the 
two point strain $S(\yy, \xx, \uu(t))$ between the point $\xx$ and any point $\yy \in H_\epsilon(\xx)$  given by
\begin{align}\label{strain}
    S(\yy, \xx, \uu(t)) = \frac{(\uu(t,\yy)-\uu(t,\xx))}{|\yy-\xx|}\cdot\ee, & \hbox{  where  } \ee=\frac{\yy-\xx}{|\yy-\xx|},
\end{align}	
and we set $$r:=r(t,\uu)=\sqrt{|\yy-\xx|}S(\yy,\xx,\uu(t)).$$  The strain  satisfies $ S(\yy, \xx, \uu(t)) = S(\xx, \yy, \uu(t)) $.
The scaled nonlocal kernel is introduced and is given by
\begin{equation}\label{scale}
\rho^\epsilon(\yy,\xx)=\frac{{\chi_\Omega(\yy)}J^\epsilon(|\yy-\xx|)}{\epsilon V^\epsilon_d},
\end{equation}
where {$\chi_\Omega$} is the characteristic function of $\Omega$, $J^\epsilon(|\yy-\xx|)$ is the influence function, a positive function on the  ball of radius $\epsilon$ centered at $\xx$ and is radially decreasing taking the value $M$ at the center of the ball  and $0$  for $|\yy-\xx|\geq\epsilon$. The radially symmetric influence function is written  $J^\epsilon(|\yy-\xx|)=J(|\yy-\xx|/\epsilon)$. The nonlocal kernel is scaled by $\epsilon^{-1}(V_d^{\epsilon})^{-1}$ enabling the model to be characterized by the nondimensional linear elastic shear $\mu$ and Lam\'e $\lambda$ moduli  in regions away from the crack, critical energy release rate $\mathcal{G}^\epsilon_c$, and bond strength  $g'(r_c)$. The introduction of dimensions and calibration to a given material with known shear $\overline{\mu}$ and Lam\'e $\overline{\lambda}$ moduli,  critical energy release rate $\overline{\mathcal{G}_c}$ and strength is established in Section \ref{nond}.

\begin{figure}
   \centering
\includesvg[width=0.8\linewidth]{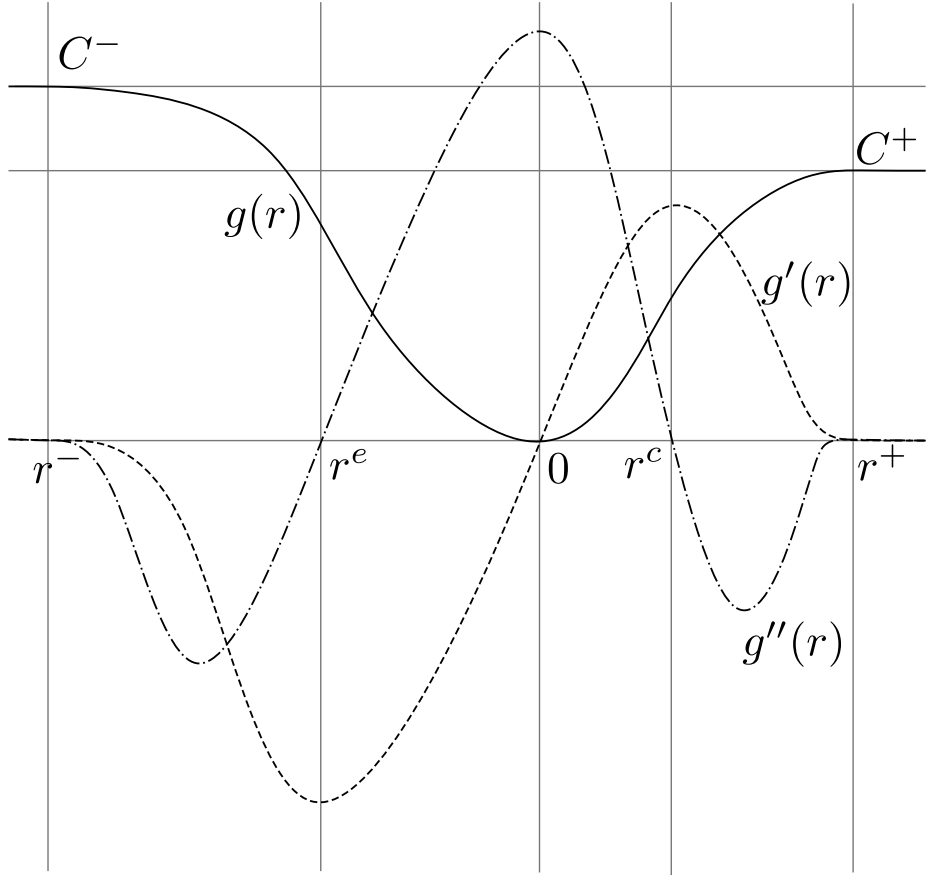}
		  \caption{The potential function $g(r)$ and derivatives $g'(r)$ and $g''(r)$  for tensile force. Here $C^+=g(r^+)$ and $C^- = g(r^-)$ are the asymptotic values of $g$. The derivative of the force potential goes smoothly to zero at $r^+$ and $r^-$.}
		  \label{ConvexConcavea}
\end{figure}

Bond force is related to strain similar to the cohesive force law given in \cite{lipton2014dynamic,lipton2016cohesive}. Under this law, the force is linear for small positive strains and for larger positive strains the force begins to soften and  becomes zero
after reaching a critical strain.   For negative strain the bond force resists  the strain.
These features are encoded into the force function expressed in terms of the derivative of the potential function $g(r)$ defined for,  $-\infty< r <\infty$, with $g(r^-):=C^-$ for $r\leq r^-$, and $g(r^+):=C^+$ for $r^+\leq r$. We choose $r^c$ and $r^e$ such that $r^-<r^c<0<r^e<r^+$ and the potential is convex for  $r^e\leq r\leq r^c$ and concave otherwise. 
Here we can choose $-r^e$ arbitrairily larger than $r^c$. 
The profile of force potential used here is shown  in Figure \ref{ConvexConcavea}. Here, the functions $g',\,g''$   (depicted in Figure  \ref{ConvexConcavea}) satisfy
\begin{align}
\label{prime}
\max_{-\infty<r< \infty}\{| g'(r)|\} <\infty&
\, \hbox{  and }\max_{-\infty<r<\infty}\{| g''(r) |\}<\infty.
\end{align}
The profile is chosen to have bounded  asymptote  $C^-$ to insure the dynamics is well-posed, see Section \ref{wellposedness}.

To construct the constitutive law relating force to strain, set $r^\pm=\sqrt{|\yy-\xx|}S^\pm$, so that  $g'(r)=0$ for $r\leq r^-$ and $r^+\leq r$ and set  $r^c=\sqrt{|\yy-\xx|}S^c$ and   $r^e=\sqrt{|\yy-\xx|}S^e$. We require $g''(r)$ to be continuous on $-\infty<r <\infty$.  The constitutive law relating force to strain and subsequent bond failure is given by
\begin{equation}\label{const1}
\boldsymbol{f}^\epsilon(t,\yy,\xx,\uu)=
\frac{2\rho^\epsilon(\yy,\xx)}{\sqrt{|\yy-\xx|}}\gamma(\uu)(\yy,\xx,t)g'(r(t,\uu))\boldsymbol{e},
\end{equation}
where  $\gamma(\uu)(\yy,\xx,t)$ is the degradation factor that decreases from one when critical strain is reached. The  first form of degradation factor is a scaled version of the one presented  in \cite{EmmrichPhust}.
Let $(r-r^+)^+$ be the positive part of $r-r^+$ and the degradation factor  $\gamma(\uu)(\yy,\xx,t)$ is given by
\begin{align}\label{dfs}
\gamma(\uu)(\yy,\xx,t)=h\left(\int_0^t\,(\,r(\tau,\uu))-r^+)^+\,d\tau\right),
\end{align}
$h=h(x)$ is nonnegative, takes the value one for  arguments $x<0$, decreases  smoothly to zero for $0\leq x<x^D$, and is zero for $x^D\leq x$. 
It is clear from this definition that irrevocable degradation occurs when the strain exceeds $S^+=r^+/\sqrt{|\yy-\xx|}$ and  the time spent above the threshold depends on the strain rate.  The bond between $\xx$ and $\yy$ fails (breaks) when $\gamma(\uu)(\yy,\xx,t)=0$. The time to failure can be made small by taking $x^D$ small.

Set $r^D>r^+$ and
a  second form  of degradation factor is given by the scaled version of the one presented in \cite{DuTaoTian} given by
\begin{align}\label{dfs2}
\gamma(\uu)(\yy,\xx,t)=h \left(\sqrt{|\yy-\xx|}S^*(t,\yy,\xx,\uu)\right),
\end{align}
where $h(r)$ is $1$ for $-\infty \leq r\leq r^+$ and decreases smoothly to zero at $r=r^D$. The maximum strain up to time is $t$ given by
\begin{align*}
S^*(t,\yy,\xx,\uu)=\max_{0\leq \tau\leq t}\{ S(\yy, \xx, \uu(\tau))\}.
\end{align*}
As before, irrevocable degradation occurs when $\sqrt{|\yy-\xx|}S^*(t,\yy,\xx)$ exceeds $r^+$, but now the bond breaks when  $ \sqrt{|\xx-\yy|}S^*(t,\yy,\xx,\uu)\geq r^D>r^+$. Here $r^D-r^+>0$ can be taken arbitrarily small.
In both cases, the degradation factor is symmetric, i.e., $\gamma(\uu)(\yy,\xx,t)=\gamma(\uu)(\xx,\yy,t)$. 
The nonlocal force density $\LL^\epsilon[\uu](t,\xx)$ defined for all points  $\xx$ in $\Omega$  is given by
\begin{align}
      \label{eq: force}
 \LL^\epsilon [\uu](t,\xx) = -\int_{\Omega} \boldsymbol{f}^\epsilon(t,\yy,\xx,\uu) \,d\yy.
\end{align}
The material is assumed to be homogeneous and the balance of linear momentum for each point $\xx$ in the body $\Omega$ in dimensionally reduced form is given by
\begin{align}\label{eq: linearmomentumbal}
\rho\ddot{\uu}(t,\xx)+ \LL^\epsilon [\uu](t,\xx) =\bb(t,\xx),
\end{align}
where  $\bb(t,\xx)$  is a prescribed dimensionally reduced body force density and  $\rho$
is a nondimensional ratio and is given by  \eqref{rhonondim}. 
The  linear momentum balance is supplemented with the initial conditions on the displacement and velocity given by
\begin{align}
\uu(0,\xx)=\uu_0(\xx),\nonumber\\
\dot{\uu}(0,\xx)=\vv_0(\xx),\label{initialconditions}
\end{align}
and we look for a solution $\uu(t,\xx)$ on a time interval $0<t<T$.
This completes the problem formulation.

We observe that nonlocal dynamics for $\uu(t)$  over  $\Omega$ in $\mathbb{R}^d$ presents as dynamics in $\Omega\times(\Omega\cap H_\epsilon(\xx))\subset\mathbb{R}^d\times\mathbb{R}^d$ for the two point strain $S(\yy,\xx,\uu(t))$.  The strain dynamics generates  a set where force is related to strain  and a failure set. The set  of pairs $(\xx,\yy)$ in  $\Omega\times(\Omega\cap H_\epsilon(\xx))$ for which the bond force is related to strain $S(\yy,\xx,\uu(t))$ is the set $EZ^\epsilon(t)$. The failure set $\Gamma^\epsilon(t)$ is the collection of pairs $(\xx,\yy)$ in $\Omega\times(\Omega\cap H_\epsilon(\xx))$ such that  the bond between them was broken at some time $\tau$, $0<\tau\leq t$. 
The failure set is written 
\begin{align}\label{damageset}
\Gamma^\epsilon(t)=\{(\xx,\yy)\hbox{ in }\Omega\times(\Omega\cap H_\epsilon(\xx)); \, \gamma(\uu)(\yy,\xx,t)=0\}
\end{align}
with indicator function
\begin{equation}\label{busted}
\chi^+(\yy,\xx,t)=\left\{\begin{array}{ll}1,&\hbox{if  $(\xx,\yy)$ in $\Gamma^\epsilon(t)$}\\
0,& \hbox{otherwise}.\end{array}\right.
\end{equation}

\section{Wellposedness }
\label{wellposedness}

Body forces are easily chosen for which one can find a unique solution $\uu(t,\xx)$ of the initial value problem  \eqref{eq: linearmomentumbal} and \eqref{initialconditions}.
To this end, we introduce the subspace of $L^\infty(\Omega,\mathbb{R}^d)$ given by all rigid body motions $\mathcal{U}$ is defined by
\begin{equation}\label{rigidmotion}
\mathcal{U}=\{\ww:\,\ww=\mathbb{Q}\xx+\mathbf{c};\, \mathbb{Q} \in \mathbb{R}^{d\times d},\,\,\mathbb{Q}^T=-\mathbb{Q};\,\,\mathbf{c}\in\mathbb{R}^d\}. \end{equation}
Calculation shows that the strain  $S(\yy,\xx,\ww)=0$ for $\ww\in\mathcal{U}$. Moreover, the work of \cite{DuGunLehZho} shows that this is precisely the null space of the strain operator on $L^\infty(\Omega;\mathbb{R}^d)$.
We guarantee solvability requiring that the body force density $\bb(\xx,t)$ satisfy $\int_\Omega\,\ww\cdot\bb,d\xx=0$ for all $\ww\in\mathcal{U}$.
With this in mind, we introduce  the subspace  of $L^\infty(\Omega;\mathbb{R}^d)$ denoted by
$\dot L^\infty(\Omega;\mathbb{R}^d)$
defined to be all elements of $L^\infty(\Omega;\mathbb{R}^d)$ orthogonal to $\mathcal{U}$. 
The space of functions that are twice continuous in time taking values in $X=\dot{L}^\infty(\Omega,\mathbb{R}^d)$ is denoted by $C^2([0,T];X)$. The norm on this space is given by
\begin{align}\label{norms}
\Vert \uu(t,\xx)\Vert_{C^2([0,T]; X)}=\sup_{0\leq t\leq T}\{ \sum_{i=0}^2||\partial_t^i\uu(t,\xx)||_{X}\}.
\end{align}
To simplify notation,  the space $C([0,T];X)$ is written as $CX$ and the  corresponding norm is denoted by $\Vert\cdot \Vert_{CX}$. Similarly, we write $C^2([0,T];X)$ as $C^2X$.
Existence and uniqueness of the solution to the initial boundary value problem is stated below:

\begin{theorem}[{\bf Existence and Uniqueness of Solution}]
\label{exist}
 The initial value problem  given by \eqref{eq: linearmomentumbal} and \eqref{initialconditions} with initial data in $X$ and $\bb(t,\xx)$ belonging to $CX$
has a unique solution $\uu(t,\xx)$ in $C^2X$ with two strong  derivatives in time.
 \end{theorem}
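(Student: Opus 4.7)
My plan is to recast \eqref{eq: linearmomentumbal}--\eqref{initialconditions} as a Volterra integral fixed-point problem for the pair $(\uu,\dot\uu)$ in $C([0,T_0];X\times X)$ and solve it via Banach's contraction mapping theorem, then iterate to extend to any $[0,T]$. Setting $\vv=\dot\uu$, I replace the initial value problem by the equivalent integral system
\begin{equation*}
\uu(t)=\uu_0+\int_0^t\vv(\tau)\,d\tau,\qquad
\vv(t)=\vv_0+\rho^{-1}\int_0^t\bigl(\bb(\tau)-\LL^\epsilon[\uu](\tau)\bigr)\,d\tau,
\end{equation*}
and let $\mathcal{T}$ be the induced map on $C([0,T_0];X\times X)$. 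A fixed point is automatically in $C^2X$ by differentiating back in the equation, provided $\uu\mapsto\LL^\epsilon[\uu]$ is continuous from $CX$ to $CX$, which I verify by dominated convergence using the bounds on $g'$, $h$, and $\rho^\epsilon$.

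The key analytic step is a history-dependent Lipschitz estimate for $\LL^\epsilon$. Decomposing
\begin{equation*}
\boldsymbol{f}^\epsilon(\uu)-\boldsymbol{f}^\epsilon(\vv)=\frac{2\rho^\epsilon}{\sqrt{|\yy-\xx|}}\Bigl[(\gamma(\uu)-\gamma(\vv))\,g'(r(\uu))+\gamma(\vv)\bigl(g'(r(\uu))-g'(r(\vv))\bigr)\Bigr]\ee,
\end{equation*}
and using $\|g'\|_\infty,\|g''\|_\infty<\infty$ from \eqref{prime}, the Lipschitz continuity of $h$, and the pointwise bound $|S(\yy,\xx,\uu)-S(\yy,\xx,\vv)|\leq 2\|\uu-\vv\|_X/|\yy-\xx|$, I obtain
\begin{equation*}
|\boldsymbol{f}^\epsilon(\uu)-\boldsymbol{f}^\epsilon(\vv)|(\yy,\xx,t)\leq\frac{C_\epsilon}{|\yy-\xx|}\sup_{0\leq\tau\leq t}\|\uu(\tau)-\vv(\tau)\|_X,
\end{equation*}
where the history term from $\gamma(\uu)-\gamma(\vv)$ is handled for both \eqref{dfs} and \eqref{dfs2} by the elementary inequalities $|(r-r^+)^+-(s-r^+)^+|\leq|r-s|$ and $|\max_\tau f-\max_\tau g|\leq\sup_\tau|f-g|$. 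Since $|\yy-\xx|^{-1}$ is integrable over $H_\epsilon(\xx)$ when $d=2,3$, integration yields $\|\LL^\epsilon[\uu](t)-\LL^\epsilon[\vv](t)\|_X\leq K(\epsilon)\sup_{0\leq\tau\leq t}\|\uu(\tau)-\vv(\tau)\|_X$.

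Feeding this bound into $\mathcal{T}$ gives $\|\mathcal{T}(\uu_1,\vv_1)-\mathcal{T}(\uu_2,\vv_2)\|_{C([0,T_0];X\times X)}\leq T_0\bigl(1+K(\epsilon)/\rho\bigr)\|(\uu_1,\vv_1)-(\uu_2,\vv_2)\|_{C([0,T_0];X\times X)}$, a contraction for $T_0$ small depending only on $\epsilon$ and $\rho$. Banach's theorem yields a unique local solution, and since $K(\epsilon)$ is independent of the size of the iterate (the bounds on $g'$ and $h$ are global), the same step can be repeated on $[T_0,2T_0],[2T_0,3T_0],\ldots$ to cover any $[0,T]$; uniqueness on $[0,T]$ follows from a standard Gronwall argument applied to the difference of two solutions. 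To guarantee $\uu(t)\in X=\dot L^\infty$, I use that $\boldsymbol{f}^\epsilon$ is antisymmetric under $\xx\leftrightarrow\yy$, and that $(\ww(\yy)-\ww(\xx))\cdot\ee=\mathbb{Q}(\yy-\xx)\cdot\ee=0$ for every $\ww=\mathbb{Q}\xx+\mathbf{c}\in\mathcal{U}$ with $\mathbb{Q}^T=-\mathbb{Q}$; symmetrizing $\int_\Omega\ww\cdot\LL^\epsilon[\uu]\,d\xx$ then shows $\LL^\epsilon[\uu]\perp\mathcal{U}$, so the assumption $\uu_0,\vv_0,\bb(t)\in X$ propagates through the fixed-point map.

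The main technical obstacle is the simultaneous presence of the $1/|\yy-\xx|$ singularity that appears when differencing the strain and the history dependence of $\gamma$. The singularity is only integrable over $H_\epsilon(\xx)$ because of the extra $\sqrt{|\yy-\xx|}$ factor the authors insert into \eqref{const1}; without it the integrand would behave like $|\yy-\xx|^{-3/2}$ and no uniform Lipschitz constant would exist. The history dependence forces the Lipschitz estimate into Volterra form and is precisely why the sup-in-time appears on the right-hand side, converting what would otherwise be a clean contraction into one that requires Gronwall in the iteration and uniqueness steps.
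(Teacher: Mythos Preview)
Your proof is correct and follows the same essential strategy as the paper: establish a global-in-$\uu$ Lipschitz bound for $\LL^\epsilon$ on $CX$ via exactly the decomposition of $\boldsymbol{f}^\epsilon(\uu)-\boldsymbol{f}^\epsilon(\vv)$ you write, then apply Banach's fixed-point theorem. The differences are tactical rather than substantive. The paper works directly with the second-order integral form
\[
(I\uu)(t)=\uu_0+t\vv_0+\rho^{-1}\int_0^t(t-\tau)\bigl(\LL^\epsilon[\uu](\tau)+\bb(\tau)\bigr)\,d\tau
\]
and obtains a contraction on all of $[0,T]$ in one shot by passing to the Bielecki-weighted norm $|||\uu|||=\max_t e^{-2LTt}\|\uu(t)\|_X$; you instead reduce to a first-order system and use a short-time contraction plus iteration. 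Both are standard and interchangeable, and the paper's weighted-norm trick simply avoids the bookkeeping of restarting the argument on successive subintervals. Your explicit verification that $\LL^\epsilon[\uu]\perp\mathcal{U}$ (so that $\LL^\epsilon$ genuinely maps $CX$ into $CX$ with $X=\dot L^\infty$) is a point the paper asserts but does not spell out, so that is a small plus. One minor quibble with your closing commentary: without the inserted $\sqrt{|\yy-\xx|}$ the Lipschitz integrand would indeed scale like $|\yy-\xx|^{-3/2}$, but this is still integrable over $H_\epsilon(\xx)$ in $d=2,3$, so the claim that ``no uniform Lipschitz constant would exist'' overstates the role of that factor here; it is not needed for well-posedness, only for the energy scalings later in the paper.
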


\noindent The initial value problem delivers a displacement damage set pair: $\{\uu^\epsilon(t)$, $\Gamma^\epsilon(t)\}$.

The proof of Theorem \ref{exist} follows \cite{EmmrichPhust} and has new features due to the definition of the bond force potential.
We now show
\begin{enumerate}
\item The operator $ \LL^\epsilon [\uu](t,\xx)$ is a map from $CX$ into itself.
\item The operator $ \LL^\epsilon [\uu](t,\xx)$ is Lipschitz continuous in $\uu$ with respect to the norm of $CX$.
\end{enumerate}
The theorem then follows from an application of the Banach fixed point theorem.

To establish properties (1) and (2), we first summarize the differentiability and Lipschitz continuity of the damage factor. 
 \vspace*{1\baselineskip}
 
 \begin{lemma}[{\bf Differentiablity and Lipshitz Continuity of Damage Factor}]
\label{HCont}
 For $\uu\in CX$ the mapping
\begin{align}\label{properties1}
(\yy,\xx)\mapsto \gamma(\uu)(\yy,\xx,t):\Omega\times \Omega\rightarrow\mathbb{R}
\end{align}
is measurable for every $t\in[0,T]$, and for the degradation function given in \eqref{dfs} the mapping
\begin{align}\label{properties 2}
t\mapsto \gamma(\uu)(\yy,\xx,t):[0,T]\rightarrow\mathbb{R},
\end{align}
is differentiable for almost all $(\yy,\xx)$. Moreover, 
for almost all $(\yy,\xx)\in (\Omega\times \Omega\cap H_\epsilon(\xx))$ and all $t\in [0,T]$, the map
\begin{align}\label{properties 3}
\uu\mapsto \gamma(\uu)(\yy,\xx,t):CX\rightarrow\mathbb{R}
\end{align}
is Lipschitz continuous and for \eqref{dfs}
\begin{align}\label{properties 4}
| \gamma(\uu)(\yy,\xx,t)-\gamma(\ww)(\yy,\xx,t)|\leq \Vert \uu-\ww\Vert_{CX} \frac{tC}{\sqrt{|\yy-\xx|}}
\end{align}
while for \eqref{dfs2}
\begin{align}\label{properties 5}
| \gamma(\uu)(\yy,\xx,t)-\gamma(\ww)(\yy,\xx,t)|\leq \Vert \uu-\ww\Vert_{CX} \frac{C}{\sqrt{|\yy-\xx|}}.
\end{align}
 \end{lemma}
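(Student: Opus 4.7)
The plan is to handle the three assertions in order, reducing everything to one quantitative fact: the pointwise bond-strain bound
\begin{align*}
|r(\tau,\uu)-r(\tau,\ww)|=\sqrt{|\yy-\xx|}\,|S(\yy,\xx,(\uu-\ww)(\tau))|\leq\frac{2\Vert\uu-\ww\Vert_{CX}}{\sqrt{|\yy-\xx|}},
\end{align*}
which is immediate from the definition of the two-point strain together with $\Vert\uu(\tau)\Vert_{L^\infty}\leq\Vert\uu\Vert_{CX}$. All remaining steps are compositions with globally Lipschitz scalar functions, namely $h$ (with finite Lipschitz constant $L_h$ by the smoothness assumption) and the positive-part map $(\cdot)^+$ (Lipschitz constant one).

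First, for measurability in $(\yy,\xx)$: since $\uu(\tau,\cdot)\in L^\infty(\Omega;\bbR^d)$, the map $(\yy,\xx)\mapsto r(\tau,\uu)$ is measurable for each $\tau$. In case \eqref{dfs}, Fubini's theorem makes $(\yy,\xx)\mapsto\int_0^t(r(\tau,\uu)-r^+)^+\,d\tau$ measurable, and composition with continuous $h$ preserves measurability. In case \eqref{dfs2}, $\uu\in CX$ forces $\tau\mapsto S(\yy,\xx,\uu(\tau))$ to be continuous for almost every bond $(\yy,\xx)$, so $S^*$ coincides with a supremum over rational $\tau$ and is measurable as a countable supremum of measurable functions. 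The time-differentiability claim for \eqref{dfs} then follows from continuity in $\tau$ of the integrand (again a consequence of $\uu\in CX$) and smoothness of $h$ via the fundamental theorem of calculus and the chain rule.

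Second, the two Lipschitz bounds follow mechanically from the pointwise strain estimate. For \eqref{dfs}, composing Lipschitz estimates outward gives
\begin{align*}
|\gamma(\uu)(\yy,\xx,t)-\gamma(\ww)(\yy,\xx,t)|\leq L_h\int_0^t|r(\tau,\uu)-r(\tau,\ww)|\,d\tau\leq\frac{2L_h\,t}{\sqrt{|\yy-\xx|}}\Vert\uu-\ww\Vert_{CX},
\end{align*}
which is \eqref{properties 4} with $C=2L_h$. For \eqref{dfs2}, the elementary identity $|\sup f-\sup g|\leq\sup|f-g|$ applied to the definition of $S^*$ yields
\begin{align*}
|\gamma(\uu)(\yy,\xx,t)-\gamma(\ww)(\yy,\xx,t)|\leq L_h\sqrt{|\yy-\xx|}\sup_{0\leq\tau\leq t}|S(\yy,\xx,(\uu-\ww)(\tau))|\leq\frac{2L_h}{\sqrt{|\yy-\xx|}}\Vert\uu-\ww\Vert_{CX},
\end{align*}
which is \eqref{properties 5}.

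The main obstacle is the measurability statement in case \eqref{dfs2}, because a pointwise supremum of an uncountable family of measurable functions need not be measurable. The remedy is that the time continuity of the strain supplied by $\uu\in CX$ allows replacement of the supremum over $[0,t]$ by one over a countable dense subset. A secondary conceptual point worth flagging is that the $1/\sqrt{|\yy-\xx|}$ singularity appearing in both Lipschitz bounds is not slack: it is a real pointwise singularity that must later be absorbed by the scaled kernel $\rho^\epsilon$ when the full operator $\LL^\epsilon$ is shown to be Lipschitz on $CX$ in the Banach fixed point argument behind Theorem~\ref{exist}.
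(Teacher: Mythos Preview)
Your proof is correct and follows the same approach that the paper invokes: the paper's own proof is a two-line appeal to \cite{EmmrichPhust} and \cite{DuTaoTian}, merely noting that the arguments there go through verbatim except for the appearance of the $|\yy-\xx|^{-1/2}$ factor. You have supplied exactly those details---the pointwise strain bound, the Lipschitz chain through $h$ and $(\cdot)^+$, and the countable-supremum trick for measurability of $S^*$---so there is nothing to add.
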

 \begin{proof}
The first two  claims are immediate and \eqref{properties 4} and  \eqref{properties 5} follow as in \cite{EmmrichPhust} and \cite{DuTaoTian} but the factor $(|\yy-\xx|)^{-1/2}$ appears in the denominator of the right hand side  of \eqref{properties 4} and \eqref{properties 5}.\end{proof}
To complete the demonstration of (1)  and (2) we point out 
the key features given below.
 \vspace*{1\baselineskip}
 
\begin{lemma}[{\bf Boundness and Lipshitz continuity of $g'$}]
\label{linftcnttheta} 
Given two functions $\uu$ and $\ww$  in $CX$
\begin{align}\label{lipshitzg}
| g'(\sqrt{|\yy-\xx|}S(\yy,\xx,\uu(t)))-g'\sqrt{|\yy-\xx|}S(\yy,\xx,\ww(t)))|\leq \Vert \uu-\ww\Vert_{CX}\frac{C}{\sqrt{|\yy-\xx|}},
\end{align}
and
\begin{align}\label{lipshitze}
| g'(\sqrt{|\yy-\xx|}S(\yy,\xx,\ww(t)))\ee|\leq C.
\end{align}
 \end{lemma}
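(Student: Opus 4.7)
The plan is to treat each inequality separately, both being straightforward consequences of the hypotheses \eqref{prime} on $g$ together with the definition of $r(t,\uu)$.

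For the boundedness estimate \eqref{lipshitze}, I would simply invoke the first bound in \eqref{prime}: since $g'$ is uniformly bounded on $\mathbb{R}$ by some constant (which I would absorb into $C$) and $|\ee|=1$ by definition of $\ee=(\yy-\xx)/|\yy-\xx|$, the product has magnitude at most $\max_{r\in\mathbb{R}}|g'(r)|$. No further manipulation is needed.

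For the Lipschitz estimate \eqref{lipshitzg}, I would use that $g'$ is globally Lipschitz (by the second bound in \eqref{prime}, which controls $g''$) together with the explicit form
\begin{equation*}
r(t,\uu)=\sqrt{|\yy-\xx|}\,S(\yy,\xx,\uu(t))=\frac{(\uu(t,\yy)-\uu(t,\xx))\cdot\ee}{\sqrt{|\yy-\xx|}}.
\end{equation*}
By the mean value theorem,
\begin{equation*}
|g'(r(t,\uu))-g'(r(t,\ww))|\leq \bigl(\max_{\mathbb{R}}|g''|\bigr)\,|r(t,\uu)-r(t,\ww)|,
\end{equation*}
and, using $|\ee|=1$ and the triangle inequality,
\begin{equation*}
|r(t,\uu)-r(t,\ww)|\leq \frac{|(\uu-\ww)(t,\yy)|+|(\uu-\ww)(t,\xx)|}{\sqrt{|\yy-\xx|}}\leq \frac{2\,\Vert \uu-\ww\Vert_{CX}}{\sqrt{|\yy-\xx|}},
\end{equation*}
since the $X=\dot L^\infty(\Omega;\mathbb{R}^d)$ norm bounds pointwise (a.e.) values. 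Combining these two estimates yields \eqref{lipshitzg} with constant $C=2\max_{\mathbb{R}}|g''|$.

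There is no real obstacle here: the only subtlety is that the factor $1/\sqrt{|\yy-\xx|}$ on the right of \eqref{lipshitzg} comes directly from the $\sqrt{|\yy-\xx|}$ inside the argument of $g'$, which is the reason the author inserted it into the constitutive law. This singular factor is integrable against $\rho^\epsilon(\yy,\xx)$ (which behaves like $J^\epsilon(|\yy-\xx|)/(\epsilon V_d^\epsilon)$ on $H_\epsilon(\xx)$), so when combined with the Lipschitz estimate \eqref{properties 4}–\eqref{properties 5} for $\gamma$ it will give the desired $CX\to CX$ Lipschitz continuity of $\LL^\epsilon$ needed for the Banach fixed-point argument in Theorem \ref{exist}; but that integrability check belongs to the subsequent proof of properties (1)–(2), not to this lemma.
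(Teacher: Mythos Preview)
Your proposal is correct and is essentially the argument the paper has in mind: the paper's own proof of this lemma is simply a one-line citation (``\eqref{lipshitzg} follows from \cite{bhattaLiptonMMS} and \eqref{lipshitze} follows from the definition of $g'(r)$''), and your explicit derivation via the mean value theorem with the bound on $g''$ from \eqref{prime}, combined with the $1/\sqrt{|\yy-\xx|}$ factor coming from the definition of $r(t,\uu)$, is exactly the content behind that citation. Your observation about integrability of the singular factor is also correct and, as you note, belongs to the subsequent step rather than to this lemma.
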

 \begin{proof}
The inequatity \eqref{lipshitzg} follows from \cite{bhattaLiptonMMS} and \eqref{lipshitze} follows from the definition of $g'(r)$.
We now establish (2) and recover (1) as a consequence.
Given $\uu$ and $\ww$ in $CX$ 
\begin{align}\label{lipshitzL}
| \LL^\epsilon [\uu](t,\xx) - \LL^\epsilon [\ww](t,\xx)|\leq I+II,
\end{align}
where 
\begin{align}\label{l}
I=\int_{\Omega}| 
\frac{2\rho^\epsilon(\yy,\xx)}{\sqrt{|\yy-\xx|}}\gamma(\uu)(\yy,\xx,t)g'(r(t,\uu))\boldsymbol{e}-
\frac{2\rho^\epsilon(\yy,\xx)}{\sqrt{|\yy-\xx|}}\gamma(\uu)(\yy,\xx,t)g'(r(t,\ww))\boldsymbol{e}| d\yy,
\end{align}
\begin{align}\label{Il}
II=\int_{\Omega}|
\frac{2\rho^\epsilon(\yy,\xx)}{\sqrt{|\yy-\xx|}}\gamma(\uu)(\yy,\xx,t)g'(r(t,\ww))\boldsymbol{e}-
\frac{2\rho^\epsilon(\yy,\xx)}{\sqrt{|\yy-\xx|}}\gamma(\ww)(\yy,\xx,t)g'(r(t,\ww))\boldsymbol{e}| d\yy,
\end{align}

From \eqref{lipshitzg} 
\begin{align}\label{lconclude}
I\leq
C\Vert \vv-\ww\Vert_{CX}\int_{\Omega}\frac{\rho^\epsilon(\yy,\xx)}{{|\yy-\xx|}}\,d\yy,
\end{align}
from \eqref{properties 4} 
\begin{align}\label{Ilconclude}
II\leq
tC\Vert \vv-\ww\Vert_{CX}\int_{\Omega}\frac{\rho^\epsilon(\yy,\xx)}{{|\yy-\xx|}}\,d\yy.
\end{align}

All integrals are bounded for $d=2,3$, so for $0<t<T$ there is a Lipschitz constant $L$ 
such that
\begin{align}\label{LipschitzLL}
\Vert \LL^\epsilon [\uu] - \LL^\epsilon [\ww]\Vert_{(CL^{\infty})^d}\leq L\Vert \uu-\ww\Vert_{CX}.
\end{align}
\end{proof}

\begin{proof}[Proof of Theorem \ref{exist}]
Now one can easily  show the solution $\uu$ is the unique  fixed point of $\uu(t)=(I \uu)(t)$ where $I$ maps $CX$ into itself and is defined by
\begin{equation}\label{Fixedpointmap}
\begin{aligned}
(I\uu)(t)=&\uu_0+t\vv_0+\rho^{-1}\int_0^t(t-\tau)\LL^\epsilon[\uu](\tau)+\bb(\tau)\,d\tau.
\end{aligned}
\end{equation}
This problem is equivalent to finding the unique solution of the initial value problem given by
\eqref{eq: linearmomentumbal} and \eqref{initialconditions}.  We absorb the factor $\rho^{-1}$ into $\mathcal{L}^\epsilon[\uu]+\bb$ and show that $I$ is a contraction map. To see that $I$ is a contraction map an equivalent norm is introduced
\begin{equation}\label{Equivnorm}
\begin{aligned}
|||\uu|||_{CX}=\max_{t\in[0,T_0]}\{e^{-2LTt}\Vert \uu\Vert_{X}\},
\end{aligned}
\end{equation}
For $t\in [0,T]$ one has
\begin{equation}\label{Contrction}
\begin{aligned}
\Vert (I\uu)(t)-(I\ww)(t)\Vert_{X}\leq&\int_0^t(t-\tau)\Vert \LL[\uu](\tau)-\LL[\ww](\tau)\Vert_{X}\,d\tau\\
&\leq LT\int_0^t\Vert \uu-\ww\Vert_{C([0,\tau];X)}\,d\tau\\
&\leq LT\int_0^t\max_{s\in[0,\tau]}\{\Vert \uu(s)-\ww(s)\Vert_{X}e^{-2LTs}\}e^{2LT\tau}\,d\tau\\
&\leq\frac{e^{2LTt}-1}{2}|||\uu-\ww|||_{CX},
\end{aligned}
\end{equation}
hence
\begin{equation}\label{ContrctionD}
\begin{aligned}
|||(I\uu)(t)-(I\ww)(t)|||_{CX}\leq\frac{1}{2}|||\uu-\ww|||_{CX},
\end{aligned}
\end{equation}
and  $I$ is a contraction.  From the Banach fixed point  theorem there is a unique fixed point $\uu(t)$ belonging to $CX$ and it is evident from \eqref{Fixedpointmap} that $\uu(t)$ also belongs to $C^2X$.
\end{proof}

\section{Energy Balance, Bounded Damage Energy, Failure Energy and Process Zone}
\label{energybalanceprocesszone}
The energy balance and damage energy are seen to follow directly from the evolution equation with no hypotheses. The explicit formula for the damage energy in terms of the damage factor and $C^+$ is obtained.  We show these energies are bounded. We then describe the failure zone where bonds have failed and identify the process zone on which bonds are degrading.   

Energy balance in rate form is found by multiplying the evolution equation \eqref{eq: linearmomentumbal} by $\dot\uu(t)$ and integrating over the domain to get

\begin{align}
      \label{eq: linearmomentumbalmult}
\partial_t\left(\int_\Omega\, \rho \, \frac{|\dot{\uu}(t)|^2}{2}\,d\xx\right)+ \int_\Omega\,\LL^\epsilon [\uu](t)\cdot\dot\uu(t)\,d\xx =\int_\Omega\,\bb(t)\cdot\dot\uu(t)\,d\xx,
\end{align}
Writing out the second term
\begin{align}
      \label{eq: 2nd term}
\int_\Omega\,\LL^\epsilon [\uu](t)\cdot\dot\uu(t)\,d\xx =-\int_\Omega\int_\Omega\,\frac{2\rho^\epsilon(\yy,\xx)}{\sqrt{|\yy-\xx|}}\gamma(\uu)(\yy,\xx,t)g'(r(t,\uu))\boldsymbol{e}\,d\yy\cdot\dot\uu(\xx)\,d\xx,
\end{align}
and integration by parts gives 
\begin{align}\label{eq:3nd term}
\int_\Omega\,\LL^\epsilon [\uu](t)\cdot\dot\uu(t)\,d\xx =\int_\Omega\int_\Omega\,\rho^\epsilon(\yy,\xx)\gamma(\uu)(\yy,\xx,t)\partial_t g(r(t,\uu))\,d\yy\,d\xx,
\end{align}
and we get
\begin{align}\label{eq:prebalance}
\partial_t\left(\int_\Omega\, \rho\,\frac{|\dot{\uu}(t)|^2}{2}\,d\xx \right)+\int_\Omega\int_\Omega\,\rho^\epsilon(\yy,\xx)\gamma(\uu)(\yy,\xx,t)\partial_t g(r(t,\uu))\,d\yy\,d\xx =\int_\Omega\,\bb(t)\cdot\dot\uu(t)\,d\xx.
\end{align}
The kinetic energy at time $t$ is given by
\begin{equation}\label{kineticenergy}
\mathcal{K}(t)=\int_\Omega\, \rho\,\frac{|\dot{\uu}(t)|^2}{2}\,d\xx,
\end{equation}
and the elastic energy at time $t$ is given by
\begin{align}\label{elasticenergy}
\mathcal{E}^\epsilon(t)&=\int_\Omega\int_\Omega\,\rho^\epsilon(\yy,\xx)\gamma(\uu)(\yy,\xx,t)g(r(t,\uu))\,d\yy\,d\xx.
\end{align}

The time derivative for the degradation factor defined by \eqref{dfs} is seen to exist for all $\uu$ in $CX$. On the other hand, the time derivative for the  degradation factor defined by \eqref{dfs2} exists for the solution of  the initial boundary value problem \eqref{eq: linearmomentumbal} and \eqref{initialconditions}. To see this note the solution belongs to $C^1X$ so $\gamma(\uu)(\yy,\xx,t)$ is {Lipschitz} continuous in time. The damage energy is
\begin{equation}\label{damageenergy}
\mathcal{D}^\epsilon(t)=-\int_0^t\int_\Omega\int_\Omega\,\rho^\epsilon(\yy,\xx)\partial_t\gamma(\uu)(\yy,\xx,\tau)g(r(\tau,\uu))\,d\yy\,d\xx\,d\tau.
\end{equation}

Note $$\gamma(\uu)(\yy,\xx,t) \partial_tg(r(t,\uu)=\partial_t\{\gamma(\uu)(\yy,\xx,t) g(r(t,\uu)\}-\partial_t\gamma(\uu)(\yy,\xx,t) g(r(t,\uu)$$ 
and we get the rate form of energy balance:
 \vspace*{1\baselineskip}

\noindent{\bf Rate form of Energy Balance}
\begin{align}\label{eq:rateenergybalance}
\partial_t\mathcal{K}(t) + \partial_t\mathcal{E}^\epsilon(t) + \partial_t \mathcal{D}^\epsilon(t)=\int_\Omega\,\bb(t)\cdot\dot\uu(t)\,d\xx.
\end{align}

\noindent The explicit formula for $\mathcal{D}^\epsilon(t)$ is given by:
 \vspace*{1\baselineskip}

\noindent{\bf Damage Energy}
\label{damag}
\begin{align}\label{explicitdamageenergy}
\mathcal{D}^\epsilon(t)=\int_\Omega\int_\Omega\,\rho^\epsilon(\yy,\xx)C^+(1-\gamma(\uu)(\yy,\xx,t))\,d\yy\,d\xx.
\end{align}
The $d-1$ dimensional damage energy follows immediately from the definition of $\rho^\epsilon(\yy,\xx)$ given by \eqref{scale} which delivers the damage energy as an integral  against a $d-1$ dimensional measure. 

To obtain the  explicit formula \eqref{explicitdamageenergy}  one exchanges time and space integrals in \eqref{damageenergy} and  evaluates
\begin{equation}\label{product}
-\int_0^t\partial_t\gamma(\uu)(\yy,\xx,\tau)g(r(\tau,\uu))\,d\tau,
\end{equation}
for  fixed $(\xx,\yy)$. Define $t^I_{\yy,\xx,\uu}$ as the first time that $S(\yy,\xx,\uu(t))>S^+$ and $t^F_{\yy,\xx,\uu}$ as the time that $\gamma(\uu)(\yy,\xx,t)=0$ so $\gamma(\uu)(\yy,\xx,t^I_{\yy,\xx,\uu})=1$ and $\gamma(\uu)(\yy,\xx,t^F_{\yy,\xx,\uu})=0$.
Observe that the desired formula  for \eqref{product} now easily follows noting that  $g(r(t,\uu))=C^+$ for  $S(\yy,\xx,\uu(t))\geq S^+$ and  from  the definition of the degradation factor \eqref{dfs}, 
\begin{equation}\label{derivitiveofdegredation}
-\partial_t\gamma(\uu)(\yy,\xx,t)=\left\{\begin{array}{ll}0,& S(\yy,\xx,\uu(t)\leq S^+\\
>0,& S^+<S(\yy,\xx,\uu(t))\\0, & t^F_{\yy,\xx,\uu}\leq t.\end{array}\right.
\end{equation}
Consequently, the damage energy associated with the bond $(\xx,\yy)$ at time $t$ is given by
\begin{equation}\label{productexplicit}
-\int_0^t\partial_t\gamma(\uu)(\yy,\xx,\tau)g(r(\tau,\uu))\,d\tau=C^+(1-\gamma(\uu)(\yy,\xx,t)),
\end{equation}
and the explicit formula follows. We set $S^D=r^D/\sqrt{|\yy-\xx|}$ and the same explicit formula follows for the degradation factor \eqref{dfs2} noting that
\begin{equation}\label{derivitiveofdegredation2}
-\partial_t\gamma(\uu)(\yy,\xx,t)=\left\{\begin{array}{ll}0,& S^*(t,\yy,\xx,\uu)\leq S^+\\
\geq 0,& S^+<S^*(t,\yy,\xx,\uu)<S^D\\0, & t^F_{\yy,\xx,\uu}\leq t\end{array}\right.
\end{equation}

The failure energy $\mathcal{F}^\epsilon({t})$ is the total damage energy expended to fail all bonds up to time ${t}$ and from  \eqref{explicitdamageenergy} and \eqref{busted} is  given by the $d-1$ dimensional integral

 \vspace*{1\baselineskip}

\noindent{\bf Failure Energy}
\label{failure}
\begin{align}\label{expliciitfailureenergy}
\mathcal{F}^\epsilon(t)=\int_\Omega\int_\Omega\,\rho^\epsilon(\yy,\xx)C^+\chi^+(\yy,\xx,t)\,d\yy\,d\xx.
\end{align}

The evolution  delivering the displacement-damage pair has  bounded elastic, potential, and damage energy given by
 \vspace*{1\baselineskip}

\begin{theorem}[{\bf Energy Bound}]
\label{energybound}
\begin{equation}\label{boundedenergy}
\max_{0<t<T}\left\{ \mathcal{K}(t)+\mathcal{E}^\epsilon(t)+\mathcal{D}^\epsilon(t)\right\}< C.
\end{equation}
Where the constant $ C$ only depends on the initial conditions and the load history.
\end{theorem}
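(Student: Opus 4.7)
The plan is to integrate the rate form of energy balance \eqref{eq:rateenergybalance} from $0$ to $t$, control the external work term via Cauchy--Schwarz and Young's inequality so that it feeds back only into the kinetic energy, and close the argument with Gronwall's inequality. Integrating \eqref{eq:rateenergybalance} in time gives
\begin{equation*}
\mathcal{K}(t)+\mathcal{E}^\epsilon(t)+\mathcal{D}^\epsilon(t)=\mathcal{K}(0)+\mathcal{E}^\epsilon(0)+\mathcal{D}^\epsilon(0)+\int_0^t\int_\Omega \bb(\tau)\cdot\dot\uu(\tau)\,d\xx\,d\tau,
\end{equation*}
where the explicit formula \eqref{explicitdamageenergy} immediately shows $\mathcal{D}^\epsilon(0)=0$ since $\gamma(\uu)(\yy,\xx,0)=1$, while $\mathcal{K}(0)+\mathcal{E}^\epsilon(0)$ is determined entirely by $\uu_0$ and $\vv_0$.

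The first key observation is that $g$ is globally bounded by $M_g:=\max\{|g_\infty|,|g_{-\infty}|\}$ (the two finite asymptotic values from Figure \ref{ConvexConcavea}) and $0\le\gamma\le 1$; together with the fact that for fixed $\epsilon$ the kernel satisfies $\int_\Omega\int_\Omega \rho^\epsilon(\yy,\xx)\,d\yy\,d\xx\le C_\rho(\epsilon,|\Omega|)$, this yields a uniform pointwise-in-time bound
\begin{equation*}
|\mathcal{E}^\epsilon(t)|\le M_g\, C_\rho=:M.
\end{equation*}
Hence controlling $\mathcal{E}^\epsilon$ costs only an additive constant and the real work is to bound $\mathcal{K}(t)+\mathcal{D}^\epsilon(t)$, both of which are nonnegative.

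For the external work term, since $\Omega$ is bounded and $\bb\in CX$ with $X=\dot L^\infty(\Omega;\mathbb{R}^d)$, one has $\|\bb(\tau)\|_{L^2(\Omega)}\le|\Omega|^{1/2}\|\bb(\tau)\|_{L^\infty(\Omega)}$. Cauchy--Schwarz followed by Young's inequality, together with $\|\dot\uu(\tau)\|_{L^2(\Omega)}^2=\tfrac{2}{\rho}\mathcal{K}(\tau)$, gives
\begin{equation*}
\int_\Omega \bb(\tau)\cdot\dot\uu(\tau)\,d\xx\le \tfrac{1}{2}\|\bb(\tau)\|_{L^2(\Omega)}^2+\tfrac{1}{\rho}\mathcal{K}(\tau).
\end{equation*}
Setting $\Psi(t):=\mathcal{K}(t)+\mathcal{D}^\epsilon(t)\ge 0$ and moving $\mathcal{E}^\epsilon(t)$ to the right-hand side of the time-integrated balance, the previous two bounds combine to yield
\begin{equation*}
\Psi(t)\le C_0+\tfrac{T}{2}\sup_{0\le\tau\le T}\|\bb(\tau)\|_{L^2(\Omega)}^2+\tfrac{1}{\rho}\int_0^t\Psi(\tau)\,d\tau,
\end{equation*}
with $C_0=\mathcal{K}(0)+\mathcal{E}^\epsilon(0)+M$, where I used $\mathcal{K}(\tau)\le\Psi(\tau)$. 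Gronwall's inequality then delivers $\Psi(t)\le C_1 e^{T/\rho}$ on $[0,T]$, and adding back $|\mathcal{E}^\epsilon(t)|\le M$ proves the theorem with $C$ depending only on $\uu_0$, $\vv_0$, $\|\bb\|_{CX}$, $T$, $\rho$, $M_g$, and (through $C_\rho$) on $\epsilon$ and $|\Omega|$.

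I do not expect a serious obstacle here: the only subtle point is that $\mathcal{E}^\epsilon(t)$ is not automatically sign-definite, since $g$ may take negative values, so one cannot apply Gronwall directly to $\mathcal{K}+\mathcal{E}^\epsilon+\mathcal{D}^\epsilon$. That is handled by absorbing $\mathcal{E}^\epsilon$ into a constant using the uniform boundedness of $g$ and finiteness of $\int\int\rho^\epsilon$, after which Gronwall applies cleanly to the nonnegative quantity $\Psi$.
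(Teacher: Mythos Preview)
Your argument is correct, but it follows a genuinely different route from the paper's. The paper exploits the fact that in this model $g\ge 0$ (so $\mathcal{E}^\epsilon(t)\ge 0$), sets $W(t)=\mathcal{K}(t)+\mathcal{E}^\epsilon(t)+\mathcal{D}^\epsilon(t)+1$, and from the rate balance obtains directly
\[
\partial_t W(t)\le \|\bb(t)\|_{L^2}\,\|\dot\uu(t)\|_{L^2}\le \sqrt{\tfrac{2}{\rho}}\,\sqrt{W(t)}\,\|\bb(t)\|_{L^2},
\]
which integrates (a ``square--root Gronwall'') to
\[
\mathcal{K}(t)+\mathcal{E}^\epsilon(t)+\mathcal{D}^\epsilon(t)\le\Bigl(\tfrac{1}{\sqrt{\rho}}\int_0^t\|\bb(\tau)\|_{L^2}\,d\tau+\sqrt{W(0)}\Bigr)^2-1.
\]
Your route---absorb $\mathcal{E}^\epsilon$ into a constant via $|g|\le M_g$ and $\int_\Omega\int_\Omega\rho^\epsilon\,d\yy\,d\xx\le C_\rho$, then apply Young and the standard Gronwall to $\Psi=\mathcal{K}+\mathcal{D}^\epsilon$---is more robust in that it does not rely on $g\ge 0$, so your caution about sign was unnecessary here but would matter for a more general potential.

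The price you pay is quantitative: your constant contains $C_\rho\sim M|\Omega|/\epsilon$ and an exponential factor $e^{T/\rho}$, whereas the paper's bound depends on $\epsilon$ only through $\mathcal{E}^\epsilon(0)$, which for fixed smooth initial data stays bounded as $\epsilon\to 0$. That uniformity is precisely what is invoked later (see \eqref{boundenergpd} in Section~\ref{vanish}) to extract compactness of $\{\uu^\epsilon\}$ in the vanishing-horizon limit. So while your proof establishes the theorem as stated, it does not deliver the $\epsilon$-uniform estimate the paper actually needs downstream; for that, the paper's approach using $g\ge 0$ and the square-root estimate is the right one.
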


\begin{proof} To find this bound,  write
\begin{align}\label{W}
W(t)=\mathcal{K}(t) + \mathcal{E}^\epsilon(t) + \mathcal{D}^\epsilon(t)+1
\end{align}
and note the rate form of energy balance gives
\begin{align}\label{eq:ineqlity}
\partial_t W(t)=\int_\Omega\,\bb(t)\cdot\dot\uu(t)\,d\xx\leq \Vert\bb(t)\Vert_{L^2(\Omega,\mathbb{R}^d)}\Vert\dot\uu(t)\Vert_{L^2(\Omega,\mathbb{R}^d)}\leq\frac{2}{\rho}\sqrt{W(t)}\Vert\dot\uu(t)\Vert_{L^2(\Omega,\mathbb{R}^d)}.
\end{align}
Integrating the inequality and applying initial conditions gives the desired result
\begin{align}\label{eq:uniformbd}
\mathcal{K}(t) + \mathcal{E}^\epsilon(t) + \mathcal{D}^\epsilon(t)\leq \left(\frac{1}{\sqrt{\rho}}\int_0^t\Vert\bb(\tau)\Vert_{L^2(\Omega,\mathbb{R}^d)}\,d\tau+\sqrt{W(0)}\right)^2-1.
\end{align}
\end{proof}
 Applying the  Rate form of Energy Balance and collecting results shows that the damage energy only changes when  $-\partial_t\gamma(\uu)(\yy,\xx,t)>0$ and is determined by the evolving displacement field, through the change in elastic energy, kinetic energy and work done against the load.  These observations are summarized in the following Lemma.

\begin{lemma}[{\bf  Growth of the Damage Energy and Process Zone}]
\label{damagezone}
The set of pairs $(\xx,\yy)$ for which $-\partial_t\gamma(\uu)(\yy,\xx,t)>0$ is called the process zone  $PZ^\epsilon(t)$  and $\partial_t\mathcal{D}^\epsilon(t)>0$  only over the process zone we have the power balance:
\begin{align}\label{eq:processzonepowerbalance}
\partial_t\mathcal{D}^\epsilon(t)=\int_\Omega\int_\Omega\,\rho^\epsilon(\yy,\xx)C^+(-\partial_t\gamma(\uu)(\yy,\xx,t))\,d\yy\,d\xx =\int_\Omega\,\bb(t)\cdot\dot\uu(t)\,d\xx-\partial_t\mathcal{K}(t) - \partial_t\mathcal{E}^\epsilon(t)  > 0.
\end{align}
\end{lemma}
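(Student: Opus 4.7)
The plan is to combine three ingredients already displayed in the paper: the explicit formula \eqref{explicitdamageenergy} for $\mathcal{D}^\epsilon(t)$, the pointwise sign analysis of $-\partial_t\gamma(\uu)$ recorded in \eqref{derivitiveofdegredation}--\eqref{derivitiveofdegredation2}, and the rate form of energy balance \eqref{eq:rateenergybalance}. Differentiating \eqref{explicitdamageenergy} in $t$ and, once justified, exchanging the derivative with the double integral yields
\begin{equation*}
\partial_t\mathcal{D}^\epsilon(t)=\int_\Omega\int_\Omega\,\rho^\epsilon(\yy,\xx)g_\infty(-\partial_t\gamma(\uu)(\yy,\xx,t))\,d\yy\,d\xx,
\end{equation*}
while subtracting the first two terms of \eqref{eq:rateenergybalance} from both sides of that identity gives the second equality of \eqref{eq:processzonepowerbalance} at once.

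For the sign and support claims, \eqref{derivitiveofdegredation} (respectively \eqref{derivitiveofdegredation2}) shows $-\partial_t\gamma(\uu)\ge 0$ pointwise, with strict inequality precisely when the bond is currently degrading but has not yet failed. Hence the integrand in the displayed formula vanishes outside $PZ^\epsilon(t)$ and is strictly positive on $PZ^\epsilon(t)$, which together with $g_\infty>0$ and $\rho^\epsilon(\yy,\xx)>0$ on $H_\epsilon(\xx)$ immediately gives both the reduction of the integral to $PZ^\epsilon(t)$ and the strict positivity $\partial_t\mathcal{D}^\epsilon(t)>0$ on any time interval during which $PZ^\epsilon(t)$ has positive measure in $\Omega\times\Omega$. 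Note also that pairs in $\Gamma^\epsilon(t)$ contribute zero, since $\gamma(\uu)$ has been identically zero past $t^F_{\yy,\xx,\uu}$, so $PZ^\epsilon(t)$ and $\Gamma^\epsilon(t)$ are disjoint.

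The one genuine obstacle is the interchange of $\partial_t$ with the double integral. For the degradation factor \eqref{dfs} this is a routine dominated-convergence argument: Lemma \ref{HCont} gives a.e.\ differentiability in $t$, and the chain rule combined with $\max|h'|<\infty$, the uniform bound on $(r(t,\uu)-r^+)^+$ coming from $\uu\in CX$ together with $\sqrt{|\yy-\xx|}\le\sqrt\epsilon$, and the integrability of $\rho^\epsilon$ supplies the dominating function. For the degradation factor \eqref{dfs2} the time derivative is not automatic, because $S^*(t,\yy,\xx,\uu)=\max_{0\le\tau\le t}S(\yy,\xx,\uu(\tau))$ is defined through a running maximum; here the extra regularity produced by Theorem \ref{exist}, namely $\uu\in C^2X$, makes $S^*$ absolutely continuous in $t$ with bounded a.e.\ derivative, and the same dominated-convergence interchange then goes through. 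This regularity hook is where the proof genuinely uses more than the mere Lipschitz estimates of Lemma \ref{HCont}; everything else is algebraic rearrangement of formulas already established.
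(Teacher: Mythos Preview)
Your proposal is correct and follows essentially the same path as the paper: the lemma is presented there without a dedicated proof block, simply as the outcome of ``collecting results'' from \eqref{eq:rateenergybalance}, \eqref{explicitdamageenergy}, and the sign analysis \eqref{derivitiveofdegredation}--\eqref{derivitiveofdegredation2}. Your treatment is a bit more careful than the paper's about justifying the interchange of $\partial_t$ with the double integral and about the caveat that the strict inequality $\partial_t\mathcal{D}^\epsilon(t)>0$ requires $PZ^\epsilon(t)$ to have positive measure, but the underlying argument is identical.
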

It is observed that conditions for which damage occurs follows directly from \eqref{eq:processzonepowerbalance} and is given below.
\begin{lemma}[{\bf  Condition for Damage}]
\label{damagecondition}
If the rate of energy put into the system exceeds the material's capacity to generate kinetic and elastic energy through  displacement and velocity, then damage occurs.
\end{lemma}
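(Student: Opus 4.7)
The plan is to read this off as an immediate corollary of the rate form of energy balance \eqref{eq:rateenergybalance} combined with the explicit representation of $\partial_t\mathcal{D}^\epsilon(t)$ provided by Lemma \ref{damagezone}. First I would rearrange \eqref{eq:rateenergybalance} to isolate the damage rate:
\begin{equation*}
\partial_t \mathcal{D}^\epsilon(t) \;=\; \int_\Omega \bb(t)\cdot \dot\uu(t)\,d\xx \;-\; \partial_t \mathcal{K}(t) \;-\; \partial_t \mathcal{E}^\epsilon(t).
\end{equation*}
The hypothesis is precisely the statement that the right-hand side is strictly positive: the rate of work input by the body force exceeds the combined rate of change of kinetic energy $\mathcal{K}(t)$ (capacity generated through velocity) and elastic energy $\mathcal{E}^\epsilon(t)$ (capacity generated through displacement). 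Hence $\partial_t\mathcal{D}^\epsilon(t) > 0$.

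Next I would invoke the explicit formula \eqref{eq:processzonepowerbalance}, which expresses this derivative as
\begin{equation*}
\partial_t\mathcal{D}^\epsilon(t) \;=\; \int_\Omega\int_\Omega \rho^\epsilon(\yy,\xx)\,g_\infty\,\bigl(-\partial_t\gamma(\uu)(\yy,\xx,t)\bigr)\,d\yy\,d\xx.
\end{equation*}
Because $\rho^\epsilon \geq 0$, $g_\infty > 0$, and $-\partial_t\gamma(\uu)(\yy,\xx,t) \geq 0$ pointwise (by \eqref{derivitiveofdegredation} or \eqref{derivitiveofdegredation2}), positivity of the integral forces $-\partial_t\gamma(\uu)(\yy,\xx,t) > 0$ on a set of pairs $(\xx,\yy)$ of positive Lebesgue measure. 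By definition this set is precisely the process zone $PZ^\epsilon(t)$, and membership in $PZ^\epsilon(t)$ signifies that the degradation factor is actively decreasing, i.e., that the material is undergoing irreversible damage. This establishes the conclusion.

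There is essentially no technical obstacle here; the content of the lemma is a direct reading of the energy identity under the hypothesized inequality, together with the sign information on $-\partial_t\gamma$ already recorded in \eqref{derivitiveofdegredation}--\eqref{derivitiveofdegredation2}. The only point worth emphasizing in writing up the argument is that the damage rate $\partial_t\mathcal{D}^\epsilon$, while localized to the process zone $PZ^\epsilon(t)$, is determined nonlocally through the global energy identity — so the inequality in the hypothesis is exactly the driver that forces $PZ^\epsilon(t)$ to have positive measure.
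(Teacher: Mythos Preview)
Your proposal is correct and matches the paper's own argument: the paper simply states that the lemma ``follows directly from \eqref{eq:processzonepowerbalance},'' which is exactly the rearranged rate balance you wrote down together with the sign information on $-\partial_t\gamma$. Your write-up merely makes explicit the one-line observation that positivity of an integral with nonnegative integrand forces the integrand (and hence the process zone) to be nontrivial.
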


In summary, the rate form of energy balance  shows that $\partial_t\mathcal{D}^\epsilon(t)>0$ only on  $PZ^\epsilon(t)$ and zero elsewhere. It is seen that the rate $\partial_t\mathcal{D}^\epsilon(t)$ and location of $PZ^\epsilon$ is given in terms of  the displacement field,  through the rate of work done by the load and the change in both the kinetic energy and elastic potential energy of the specimen. If a fissure exists, the strain is greatest in the neighborhood of its tips and this is the location of the process zone.

The energy in the process zone is denoted by $\mathcal{P}^\epsilon(t)=\int_{PZ^\epsilon(t)}\rho^\epsilon(\yy,\xx)C^+(1-\gamma(\uu)(\yy,\xx,t))\,d\yy\,d\xx$  and we integrate the rate form of energy balance \eqref{eq:rateenergybalance} and state the energy balance

\begin{lemma}[{\bf  Energy balance}]
\begin{align}\label{eq:enegrynergybalance}
 \mathcal{P}^\epsilon(t)+ \mathcal{F}^\epsilon(t) =\int_0^t\int_\Omega\,\bb(\tau)\cdot\dot\uu(\tau)\,d\xx\,d\tau-(\mathcal{K}(t) + \mathcal{E}^\epsilon(t) -(\mathcal{K}(0) + \mathcal{E}^\epsilon(0) +\mathcal{D}^\epsilon(0))).
\end{align}
\end{lemma}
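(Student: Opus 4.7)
The plan is to integrate the rate form of energy balance \eqref{eq:rateenergybalance} over $[0,t]$ and then rewrite the cumulative damage energy $\mathcal{D}^\epsilon(t)$ as the sum of the failure energy $\mathcal{F}^\epsilon(t)$ and the process-zone energy $\mathcal{P}^\epsilon(t)$.

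First I would observe that by Theorem \ref{exist} the solution $\uu$ belongs to $C^2X$, so $\mathcal{K}(t)$ is $C^1$ in $t$; the elastic energy $\mathcal{E}^\epsilon(t)$ is absolutely continuous in $t$ (since $\gamma(\uu)(\yy,\xx,\cdot)$ is absolutely continuous in time for both degradation laws, as noted in the discussion following \eqref{damageenergy}, and $g(r(\cdot,\uu))$ is differentiable in time with bounded derivative by \eqref{prime}); and $\mathcal{D}^\epsilon(t)$ is absolutely continuous by its definition \eqref{damageenergy}. Each term in \eqref{eq:rateenergybalance} is therefore integrable in $\tau$, and integrating from $0$ to $t$ produces
\begin{align*}
(\mathcal{K}(t)-\mathcal{K}(0))+(\mathcal{E}^\epsilon(t)-\mathcal{E}^\epsilon(0))+(\mathcal{D}^\epsilon(t)-\mathcal{D}^\epsilon(0)) = \int_0^t\int_\Omega \bb(\tau)\cdot\dot\uu(\tau)\,d\xx\,d\tau.
\end{align*}

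Next I would use the explicit formula \eqref{explicitdamageenergy} to decompose the integration domain $\Omega\times\Omega$ at time $t$ into three disjoint pieces: the failure set $\Gamma^\epsilon(t)$ on which $\gamma=0$, the process zone $PZ^\epsilon(t)$ on which $0<\gamma<1$, and the complementary quiescent set on which $\gamma=1$. On $\Gamma^\epsilon(t)$ the integrand $\rho^\epsilon g_\infty(1-\gamma)$ equals $\rho^\epsilon g_\infty$, on $PZ^\epsilon(t)$ it equals $\rho^\epsilon g_\infty(1-\gamma)$, and on the quiescent set it vanishes. Identifying the first contribution with $\mathcal{F}^\epsilon(t)$ via \eqref{expliciitfailureenergy} and the second with $\mathcal{P}^\epsilon(t)$ gives the bookkeeping identity $\mathcal{D}^\epsilon(t)=\mathcal{F}^\epsilon(t)+\mathcal{P}^\epsilon(t)$. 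Substituting this into the integrated balance and isolating $\mathcal{P}^\epsilon(t)+\mathcal{F}^\epsilon(t)$ on the left yields the stated identity.

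The main delicate point is identifying the process zone with the sublevel set $\{0<\gamma<1\}$. Since damage is irreversible, $\gamma$ is non-increasing in $t$, so a bond whose strain transiently crosses $S^+$ and then retreats before fully failing sits in $\{0<\gamma<1\}$ with $\partial_t\gamma=0$, hence lies outside $PZ^\epsilon(t)$ in the strict sense used in Lemma \ref{damagezone}. To close the accounting cleanly I would adopt the interpretation that $PZ^\epsilon(t)$ consists of all pairs with $0<\gamma(\uu)(\yy,\xx,t)<1$ at time $t$; the instantaneous-rate characterization from Lemma \ref{damagezone} then describes the actively degrading subset, while the bookkeeping identity becomes automatic. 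With that convention in place, the proof reduces to integrating the rate-form balance in time and reorganizing.
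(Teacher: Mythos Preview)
Your proposal is correct and follows the same route as the paper: integrate the rate form \eqref{eq:rateenergybalance} in time and split $\mathcal{D}^\epsilon(t)$ into the failure and process-zone contributions. The paper states this in one line and defines $\mathcal{P}^\epsilon(t)$ precisely so that $\mathcal{D}^\epsilon(t)=\mathcal{F}^\epsilon(t)+\mathcal{P}^\epsilon(t)$; your observation that this requires reading $PZ^\epsilon(t)$ as the set $\{0<\gamma<1\}$ rather than the instantaneous set $\{-\partial_t\gamma>0\}$ is a genuine clarification the paper leaves implicit.
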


Lastly, we see that the set of pairs $(\yy,\xx)$ corresponding to bonds not irrevocably broken is given by the set
\begin{align}\label{elastic}
EZ^\epsilon(t)=\{(\xx,\yy)\in\,\Omega\cup\Omega\cap H_\epsilon(\xx)\setminus  \Gamma^\epsilon(t)\}
\end{align}
and $PZ^\epsilon(t) \subset EZ^\epsilon(t)$.

In summary,  we observe that nonlocal dynamics over  $\Omega$ in $\mathbb{R}^d$ generates an evolution over a subset of $\mathbb{R}^d\times\mathbb{R}^d$ for the two point strain $S(\yy,\xx,\uu(t))$.  This delivers a displacement-failure set pair and energy balance.

\section{Failure Energy as Geometric Integral, Flat Cracks, Griffith Fracture Energy, and Energy Balance}
\label{Flatcrack}
In this section, the formula for the $d-1$ dimensional integral defining the failure energy is derived. We then explicitly compute the critical energy release rate per unit area (length)  necessary to generate a unit of fracture surface for this model. We find that the failure energy for a flat $d-1$ failure domain to discover it is given by the product of energy release rate multiplied by the area (length) of the surface (line segment) across which bonds are broken.   We discover that the failure energy to be nonzero and  strictly positive for flat  $d-1$ dimensional failure sets corresponding to creation of internal boundaries i.e., cracks.  Last when strain is large enough to make the force-strain relation unstable and bonds break we recover energy balance in terms of Griffith fracture energy.

We change variables  $\boldsymbol{\zeta}=(\yy-\xx)/\epsilon$ and switch the order of integration in \eqref{expliciitfailureenergy} to get the $d-1$ dimensional integral
\begin{align}
\label{1stenergy}
\mathcal{F}^\epsilon({t})=\frac{C^+}{\omega_d}\int_{H_1(0)}\,J(|\bzz|)\frac{1}{\epsilon}\left(\int_\Omega\chi^+(\xx+\epsilon\bzz,\xx,{t})\,d\xx\right)\,d\bzz.
\end{align}
We change coordinates in $\xx$ using the slicing variables and write $x=\zz+\ee s$ Where $s\in\mathbb{R}$,  $\ee=\bzz/|\bzz|$ is the direction along $\bzz$ and $\zz$ is the $d-1$ dimensional subspace of points $\zz\perp\ee$. For $(\zz,\ee)$ fixed, consider the line $\Omega_\zz^\ee=\{s\in\mathbb{R};\,\zz+\ee s\}$ and $ \Omega^e=\{\zz;\,\Omega_\zz^\ee\cap\Omega\not=\emptyset\}$ and 
\begin{align}
\label{2ndtenergy}
\mathcal{F}^\epsilon({t})=\frac{C^+}{\omega_d}\int_{H_1(0)}\,J(|\bzz|)\frac{1}{\epsilon}\int_{\Omega^\ee}\int_{\Omega^\ee_\zz}\chi^+(\zz+\ee(s+\epsilon|\bzz|),\zz+\ee s,{t})\,ds\,d\zz\,d\bzz.
\end{align}
We change to polar coordinates and $d\bzz=|\bzz|^{d-1}d|\bzz|\,d\ee$, where $d\ee$ is the surface area element of $\mathbb{S}^{d-1}$. After a change in the order of integration we arrive at the desired formula.
 \vspace*{1\baselineskip}

\noindent{\bf  Geometric integral representation for Failure energy}
\label{geometricintegral}

 \vspace*{1\baselineskip}
\begin{align}
\label{3rdtenergy}
\mathcal{F}^\epsilon({t})=\frac{C^+}{\omega_d}\int_{\mathbb{S}^{d-1}}\int_{\Omega^\ee}\int_0^1\,J(|\bzz|)|\bzz|^d m^\epsilon({t},\ee,\zz,|\bzz|)\,d|\bzz|\,d\zz\,d\ee,
\end{align}
with
\begin{align}
\label{1stkernel}
m^\epsilon({t},\ee,\zz,|\bzz|)=\frac{1}{\epsilon|\bzz|}\int_{\Omega^\ee_\zz}\chi^+(\zz+\ee(s+\epsilon|\bzz|),\zz+\ee s,{t})\,ds,
\end{align}
where
\begin{align}
\label{kernelkernel}
\chi^+(\zz+\ee(s+\epsilon|\bzz|),\zz+\ee s,{t})=\left\{\begin{array}{ll}1,&\hbox{if the pair $(\zz+\ee(s+\epsilon|\bzz|),\zz+\ee s)$ is in $\Gamma^\epsilon({t})$}\\
0,& \hbox{otherwise}.\end{array}\right.
\end{align}
The function $m^\epsilon({t},\ee,\zz,|\bzz|)$ is associated with the intersection of the line $\xx=\zz+\ee s$ with  the subset of bonds of length $|\yy-\xx|$ in $\Gamma^\epsilon(t)$ divided by the length of the bond $|\yy-\xx|$.

Next the energy per unit area to make new surface for a crack is calculated. 
Referring to \eqref {elasticenergy} the elastic energy density  is given by
\begin{align}\label{energydensity}
W^\epsilon(\xx,\uu)=\int_\Omega\,\rho^\epsilon(\yy,\xx)\gamma(\uu)((\yy,\xx,t))g(r(t,\uu))\,d\yy.
\end{align}
We are interested in the work necessary to take undamaged material and fail it irrevocably. The energy density is also the stress power density and is implicated in inelastic processes when stresses get sufficiently large.
The portion of stress power density in the upper half plane $\mathcal{K}^+$, see Figure \ref{compute} is
\begin{align}\label{energydensityup}
W_+^\epsilon(\xx,\uu)=\int_{\Omega\cap\mathcal{K}^+}\,\rho^\epsilon(\yy,\xx)g(r(t,\uu))\,d\yy.
\end{align}
From  \eqref{explicitdamageenergy} and \eqref{productexplicit} the energy necessary to break a bond of length $|\yy-\xx|$  is given by $J(|\yy-\xx|)C+/\epsilon$. This is consistent with failure corresponding to maximum energy dissipation of bond energy.
 The fracture toughness $\mathcal{G}_c^\epsilon$ is defined to be  the energy per unit area required to send the force between each point $\xx$ and $\yy$ on either side of a planar surface to zero. Because of the finite length scale of interaction only the force between pairs of points within an $\epsilon$ distance from the surface need to be considered and collecting results gives
 \begin{eqnarray}
\mathcal{G}_c^\epsilon={2}\int_0^\epsilon \,W_+^\epsilon(x_1,z,x_3,\uu)\,dz.
\label{calibrate3formula}
\end{eqnarray}
 For $d=3$ the  fracture toughness $\mathcal{G}_c^\epsilon$ is calculated as in \cite{lipton2016cohesive,jhalipton2020} and given by the formula
\begin{eqnarray}
\mathcal{G}_c^\epsilon=\frac{2}{\omega_3}\int_0^\epsilon\int_0^{2\pi}\int_z^\epsilon\int_0^{\arccos(z/\zeta)}J(\zeta)\frac{C^+}{\epsilon}\zeta^2\sin{\psi}\,d\psi\,d\zeta\,d\theta\,dz
\label{calibrate2formula}
\end{eqnarray}
where $\zeta=|\yy-\xx|/\epsilon$, see Figure \ref{compute}. A similar computation can be carried out for two dimensional problems. Calculation delivers the  formulas for $d=2,3$ given by
\begin{eqnarray}
\mathcal{G}_c^\epsilon=\mathcal{G}_c=C^+\frac{2\omega_{d-1}}{\omega_d}\, \int_{0}^1 r^dJ(r)dr, \hbox{  for $d=2,3$}
\label{epsilonfracttough}
\end{eqnarray}
where $\omega_{d}$ is the volume of the $n$ dimensional unit ball, $\omega_1=2,\omega_2=\pi,\omega_3=4\pi/3$.  This shows that the critical energy release rate is independent of $\epsilon$ for this model.
\begin{figure} 
\centering
\begin{tikzpicture}[xscale=0.78,yscale=0.78]
\draw [-,dashed, thick] (0,3.5) -- (0,-1);
\draw [->,ultra thick] (0,.95) -- (0,0);
\draw [->,thick] (0,0) -- (2.8,.95);
\draw[->,thick] (0,0) -- (-1.7,1.7);
\draw [-,thick] (-3.2,.95) -- (3.2,.95);
\node [right] at (-1.7,1.7) {${\bf y}$};
\node [left] at (0,.55) {${z}$};
\draw [ultra thick] (2.7,1) arc [radius=3, start angle=25, end angle=155];
\node [right] at (0,-0.2) {${\bf x}$};
\draw [<-](1.05,1.425) arc (45:90:1.5);
\draw [-] (0,0) -- (1.24,1.7);
\node [below] at (1.24,1.7) {$\zeta$};
\draw [->] (.9,3.5) arc (0:315:.85cm and .3cm);
\node [below] at (2.5,.75) {$\epsilon$};
\node [right] at (2.5,1.8) {$\mathcal{K}^+$};
\node [right] at (.8,3.3) {$\theta$};
\node [right] at (.0,2.0) {$\scriptscriptstyle{\arccos(z/\zeta)}$};
\end{tikzpicture} 
\caption{{\bf Evaluation of fracture toughness $\mathcal{G}_c^\epsilon$. For each point $\xx$ along the dashed line, $0\leq z\leq \epsilon$, the work required to break the interaction between $\xx$ and $\yy$ in the spherical cap is summed up in \eqref{calibrate2formula} using spherical coordinates centered at $\xx$.}}
 \label{compute}
\end{figure}
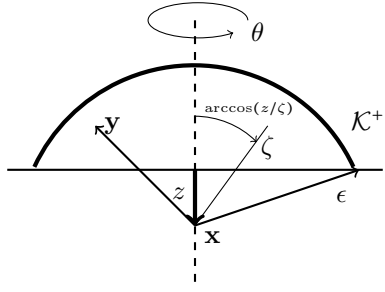


As an example consider the failure set $\Gamma^\epsilon(t)$ defined by a flat $d-1$ dimensional piece of surface (line segment) $R_t$  where points  above the surface are no longer influenced by forces due to points below the surface and vice versa. This  is the case of alignment, i.e., all bonds connecting points $\yy$  above  $R$ to points $\xx$ below are broken and vice versa. Let $\hat{s}$ be the $s$ coordinate of a line $\xx=\zz+\ee s$ piercing the planar surface (line segment) $R_t$. For this case, a straightforward calculation gives 
\begin{align}
\label{evalkernelkernel}
\chi^+(\zz+\ee(s+\epsilon|\bzz|),\zz+\ee s,{t})=\left\{\begin{array}{ll}1,&\hbox{for  $s$ in the closed interval $[\hat{s}-\epsilon|\bzz|,\hat{s}]$}\\
0,& \hbox{otherwise}.\end{array}\right.
\end{align}
so
\begin{align}
\label{ml}
m^\epsilon({t},\ee,\zz,|\bzz|)=\left\{\begin{array}{ll}1,&\hbox{if the line $\xx=\zz+\ee s$ pierces $R_t$}\\
0,& \hbox{otherwise}.\end{array}\right.
\end{align}
for all $|\bzz|<1$ and we write $m^\epsilon({t},\ee,\zz,|\bzz|)=N(t,\ee,\zz)$, where $N(t,\ee,\zz)$ is the multiplicity function of any line $\Omega_\zz^\ee$ counting the number of times it pierces the surface (line segment). For this case, $R_t$ is flat so $N(t,\ee,\zz)$ is either one or zero. The failure energy becomes
\begin{align}
\label{crofton}
\mathcal{F}^\epsilon({t})=\mathcal{G}_c\frac{1}{2\omega_{d-1}}\int_{\mathbb{S}^{d-1}}\int_{\Omega^\ee}N(t,\ee,\zz)\,d\zz\,d\ee.
\end{align}
The second factor is Crofton's formula and delivers the surface area of the internal boundary $R_t$, or more generally, the $d-1$ dimensional Hausdorff measure of $R_t$ written $\mathcal{H}^{d-1}(R_t)$ \cite{Morgan,Federer}, and
\begin{align}
\label{last}
\mathcal{F}^\epsilon({t})=\mathcal{G}_c\mathcal{H}^{d-1}(R_t).
\end{align}
This is the well known formula for Griffith fracture energy but now derived directly from a nonlocal peridynamic model. 
 What is distinctive is  that  the Griffith fracture energy  found here  follows directly from the nonlocal model without sending any parameter such as  $\epsilon$ to zero as in other approaches to free fracture. Of course we can extend the formula \eqref{last} to a system of dispersed flat cracks separated by the distance $\epsilon$ with different orientations.

 More generally Crofton's formula delivers the $d-1$ Hausdorff measure of any $d-1$ dimensional set that is rectifiable \cite{Morgan,Federer}.  With this in mind  consider the failure set $\Gamma^\epsilon(t)$ now defined by a smooth rectifiable  $d-1$ dimensional piece of surface $R_t$  where again points  above the surface are no longer influenced by forces due to points below the surface and vice versa. We have again  that \eqref{evalkernelkernel} holds where $\hat{s}$ lies on the surface. However if two or more points on the line are also on the the surface $R_t$ and are less than $\epsilon$ apart we have  
 \begin{align}\label{boundonm}
m^\epsilon({t},\ee,\zz,|\bzz|)\leq N(t,\ee,\zz).
\end{align}
Additionally if we fix $R_t$ and send $\epsilon\rightarrow 0$  
\begin{align}\label{limitofm}
\lim_{\epsilon\rightarrow 0}{m^\epsilon({t},\ee,\zz,|\bzz|)}=N(t,\ee,\zz).
\end{align}
 So we conclude from  Lebegue's convergence theorem that
 \begin{align}\label{limF}
\lim_{\epsilon\rightarrow 0}\mathcal{F}^\epsilon({t})=\mathcal{G}_c\frac{1}{2\omega_{d-1}}\int_{\mathbb{S}^{d-1}}\int_{\Omega^\ee}N(t,\ee,\zz)\,d\zz\,d\ee=\mathcal{G}_c\mathcal{H}^{d-1}(R_t).
\end{align}
From the definition of rectifiability \cite{Federer},  if a crack set $R_t$  is  rectifiable  there exist countably many compact subsets $K_i $ of $C^1$ graphs such that
\begin{align}\label{defpfrect}
\mathcal{H}^{d-1}\left( R_t\setminus \cup_{i\geq 1}K_i \right)=0,
\end{align}
and we see that the formula for the failure energy recovers a classic Griffith fracture energy generalized to rectifiable fracture sets given by
\begin{align}\label{limFrect}
\lim_{\epsilon\rightarrow 0}\mathcal{F}^\epsilon({t})=\mathcal{G}_c\mathcal{H}^{d-1}(R_t).
\end{align}
This is consistent with the surface energy  of fracture  recovered from the nonlocal model in the local limit for  the dynamic case where cracks can heal  \cite{lipton2014dynamic,lipton2016cohesive}.

The examples show how the failure energy can recover the Griffith fracture energy for flat cracks with $\epsilon>0$ and its generalization to countably rectifiable cracks but only for $\epsilon\rightarrow 0$. However crack shape is determined by how the failure set grows in time. Because of this the failure sets or cracks emerge from the dynamic bond breaking process. Growth of the failure set is determined by loading history and the shape of the sample.


\section{Linear elastic energy density in quiescent regions}
\label{quescint}
In this section it is shown that the model delivers a volume energy associated with strains away from damaging zones, i.e., $\gamma(\uu)(\yy,\xx,t)=1$.  Here, we show the model recovers the linear elastic energy density away from the damaging region.  
As before, the energy density is given by \eqref{energydensity}, i.e.,
\begin{align}\label{energydensity2}
W^\epsilon(\xx,\uu)=\int_\Omega\,\rho^\epsilon(\yy,\xx)\gamma(\uu)((\yy,\xx,t))g(r(t,\uu)\,d\yy.
\end{align}
For small strains  and  $\gamma(\uu)(\yy,\xx,t)=1$, it is shown  below that the energy density is a volume density  related to the strain energy density of a linearly elastic material. For this case, the energy density is described to leading order by shear and Lam\'e moduli $\mu$ and $\lambda$. To illustrate the ideas, suppose the strain field is smooth across ${H}_\epsilon(\xx)$ and Taylor expansion gives  $\frac{1}{2}(\nabla\uu(t,\xx)+\nabla\uu(t,\xx)^T)\ee\cdot\ee \approx S(\yy,\xx,\uu(t))$.   Let $g(r)=f(r^2)$ where $f$ is strictly increasing concave and $f(0)=0$, $f'(r^+) =0$ and  $f(r^+)=g_\infty$. Set  $F=(\nabla\uu(t,\xx)+\nabla\uu(t,\xx)^T)/2$ and  the straight forward calculation outlined  below reveals that $\mu$ and $\lambda$ describe the strain energy density to leading order for ${S}=F\ee\cdot \ee<{S}^c=r^c/\sqrt{|\yy-\xx|}$, i.e.,
\begin{eqnarray}
W^\epsilon(\xx,\uu)=\int_\Omega\,\rho^\epsilon(\yy,\xx)g(r(t,\uu))\,d\yy=2\mu |F|^2+\lambda |Tr\{F\}|^2+O(\epsilon|F|^4),
\label{LEFMequality}
\end{eqnarray}
where
\begin{eqnarray}\label{lambdamu}
\mu=\lambda=\frac{1}{8} g''(0)\int_{0}^1r^2J(r)dr, \hbox{ $d=2$}&\hbox{ and }& \mu=\lambda=\frac{1}{10} g''(0)\int_{0}^1r^3J(r)dr, \hbox{ $d=3$}.\label{calibrate1}
\end{eqnarray}
To see this, it is assumed that ${S}<<{S}^c$ and we expand $g(r(t,\uu))$ in a Taylor series about $0$ 
 noting that $g'(0)=g'''(0)=0$ to get
\begin{align}
&g(r(t,\uu))=\frac{g''(0)}{2!}|\yy-\xx|{S}^2+\frac{g''''(0)}{4!}(|\yy-\xx|{S}^2)^2+\cdots,\hbox{ and}\nonumber\\
&\frac{1}{\epsilon}J^\epsilon(|\yy-\xx|)g(r(t,\uu))=\frac{1}{\epsilon}J^\epsilon(|\yy-\xx|)(\frac{g''(0)}{2!}|\yy-\xx|S^2+\frac{g''''(0)}{4!}(|\yy-\xx|{S}^2)^2+\cdots).
\label{taylorf}
\end{align}
Substitution of \eqref{taylorf} into \eqref{energydensity2} with $\gamma(\uu)(\yy,\xx,t)=1$ and the change of variables $\xi=(\yy-\xx)/\epsilon$  delivers the energy density  restricted to quiescent regions given by
\begin{eqnarray}
W^\epsilon(\xx,\uu)=\frac{g''(0)}{2\omega_d}\int_{H_1(0)}|\xi|J(|\xi|)(F\ee\cdot \ee)^2\,d\xi+O(\epsilon|F|^4),
\label{densityyexpto2nd}
\end{eqnarray}
where ${H}_1(0)$ is the unit ball centered at the origin and $\omega_d$ is its volume $d=2,3$.
Observe next that $(F\ee\cdot \ee)^2=\sum_{ijkl}F_{ij}F_{kl}e_ie_je_ke_l$ and the leading order term in \eqref{densityyexpto2nd} is given by 
\begin{eqnarray}
\sum_{ijkl}\mathbb{C}_{ijkl}F_{ij}F_{kl}
\label{leading order}
\end{eqnarray}
where
\begin{eqnarray}
\mathbb{C}_{ijkl}=\frac{g''(0)}{2\omega_d}\int_{\mathcal{H}_1(0)}|\xi|J(|\xi|)\,e_i e_j e_k e_l\,d\xi
\label{elasticpart3}
\end{eqnarray}
The identity \eqref{LEFMequality} now follows as in \cite{lipton2016cohesive}.

\section{The limit of Vanishing Nonlocality in Spatial Variables}
\label{vanish}
We consider an example of a flat crack propagating  from left to right in a plate. In this section, we pass to the limit of vanishing horizon to find that the limit displacement field is a weak solution of the linear elastic
wave equation outside a propagating traction free crack. The elastic coefficients are given precisely by  \eqref{lambdamu}.
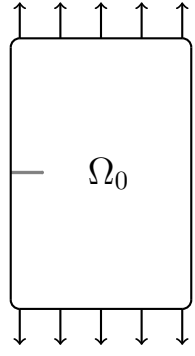
\begin{figure} 
\centering
\begin{tikzpicture}[xscale=0.60,yscale=0.60]

%
%






\draw [fill=gray, gray] (-2,-0.00) rectangle (-1.3,0.05);


\draw[fill=gray, gray] (-1.3,-0.0) arc (-90:90:0.025)  -- cycle;




\draw [-,thick] (-2,2.8) to [out=90, in=180] (-1.8 ,3.0);

\draw [-,thick] (-2,-2.8) to [out=-90, in=180] (-1.8 ,-3.0);

\draw [-,thick] (1.8,3.0) to [out=0, in=90] (2.0 ,2.8);

\draw [-,thick] (1.8,-3.0) to [out=0, in=-90] (2.0 ,-2.8);


%
%
\draw [-,thick] (-2,2.8) -- (-2,-2.8);
\draw [-,thick] (-1.8,-3) -- (1.8,-3);
\draw [-,thick] (2,2.8) -- (2,-2.8);
\draw [-,thick] (1.8,3) -- (-1.8,3);
%
%
%
%
\node [right] at (-0.5,0.0) {{\Large $\Omega_0$}};

%
%

\draw[->,thick] (-1.8,3.0) -- (-1.8,3.80);

\draw[->,thick] (-0.9,3.0) -- (-0.9,3.80);

\draw[->,thick] (0.0,3.0) -- (0.0,3.80);

\draw[->,thick] (0.9,3.0) -- (0.9,3.80);

\draw[->,thick] (1.8,3.0) -- (1.8,3.80);


\draw[->,thick] (-1.8,-3.0) -- (-1.8,-3.80);

\draw[->,thick] (-0.9,-3.0) -- (-0.9,-3.80);

\draw[->,thick] (0.0,-3.0) -- (0.0,-3.80);

\draw[->,thick] (0.9,-3.0) -- (0.9,-3.80);

\draw[->,thick] (1.8,-3.0) -- (1.8,-3.80);

\end{tikzpicture} 
\caption{{ \bf Plate with initial crack on the left edge}}
 \label{singlenotch}
\end{figure}

Define the region $\Omega$ given by a  rectangle with rounded corners, see figure \ref{singlenotch}.  The domain lies within the rectangle $\{0<x_1<a;\,-b/2<x_2<b/2\}$ and a pre-crack is present on the left side of the specimen, see figure \ref{singlenotch}. 
The domain containing the pre-crack is denoted by $\Omega_0$. The pre-crack is described by a line lying on the $x_2=0$ axis given by the interval $0\leq x_1\leq \ell(0)$. The pre-crack can be written as
\begin{equation}\label{centerat0}
R_0=\{0 \leq x_1\leq\ell^\epsilon(0),\,x_2=0\}.  
\end{equation}
Pairs of points with $\xx$ above and $\yy$ below the pre-crack passing between them have pairwise force ${\bolds{f}}^\epsilon(\xx,\yy)$ equal to zero.

The specimen  $\Omega$ pulled apart by an $\epsilon$ thickness layer of body force on the top and bottom of the domain consistent with plain strain loading. In the nonlocal setting the ``traction'' is given by the layer of body force on the top and bottom of the domain. For this case, the body force is written as 
\begin{equation}\label{bodyforce}
\begin{aligned}
\bb^\epsilon(t,\xx)=\be^2\epsilon^{-1}{gt,(x_1,)}\chi_+^\epsilon(x_1,x_2) \hbox{  on the top layer and}\\
\bb^\epsilon(t,\xx)=-\be^2\epsilon^{-1}{g(t,x_1,)}\chi_-^\epsilon(x_1,x_2)\hbox{  on the bottom  layer,}
\end{aligned}
\end{equation}
where $\be^2$ is the unit vector in the vertical direction, $\chi^\epsilon_+$ and $\chi^\epsilon_-$ are the characteristic functions of the boundary layers given by
\begin{equation}\label{layers}
\begin{aligned}
\chi_+^\epsilon(x_1,x_2)&=1\,\,\,\hbox{on $\{\theta<x_1<a-\theta, \,b/2-\epsilon<x_2<b/2\}$ and $0$ otherwise,}\\
\chi_-^\epsilon(x_1,x_2)&=1\,\,\,\hbox{on $\{\theta<x_1<a-\theta, \,-b/2<x_2<-b/2+\epsilon\}$ and $0$ otherwise},
\end{aligned}
\end{equation}
where $\theta$ is the radius of curvature of the rounded corners of $\Omega$.
The top and bottom traction forces are equal and in opposite directions and
$g(x_1,t)>0$. We take the function $g$ to be smooth and bounded in the variables $x_1$ and $t$ and define $\bg$ on $\partial \Omega$
such that
\begin{equation}\label{l2}
\begin{aligned}
\bg(t,x_1)=\pm\be^2g(t,x_1) \hbox{ on } \{\theta\leq x_1\leq a-\theta,\,\,x_2 = \pm b/2\}\text{ and } \bg=0\hbox{ elsewhere on }\partial\Omega.
\end{aligned}
\end{equation}
One checks that $\bb(t,\xx)$ satisfies the solvability condition and is in $CX$ and we obtain a unique solution $\uu^\epsilon$ in $C^2X$ to the initial value problem \eqref{eq: linearmomentumbal} and \eqref{initialconditions}.
In view of the symmetry of the loading and initial domain $\Omega_0$  and if the loading is sufficiently large we expect a symmetric crack to form and grow continuously.  Specifically we suppose a flat crack propagates from left to right for $0<t<T$.  
The flat crack is given by  $R^\epsilon_t=\{0 \leq x_1\leq\ell^\epsilon(t),\,x_2=0\}$ for $t\in(t,T)$.  Pairs of points with $\xx$ above and $\yy$ below the crack passing between them soften and break have pairwise force ${\bolds{f}}^\epsilon(\xx,\yy)$ equal to zero.  We suppose the force is sufficient to grow the crack monotonically, i.e., $\ell^\epsilon(t)<\ell^\epsilon(t')$ for $t<t'$ and pairs of points with $\xx$ above and $\yy$ below the crack passing between them have pairwise force ${\bolds{f}}^\epsilon(\xx,\yy)$ equal to zero.  In this example the crack does not propagate all the way through the sample, i.e., $\ell^\epsilon(T)<a-\delta$, for every $\epsilon$ where $\delta<a$ is a  fixed positive constant. The part of the domain not on the crack is denoted by $\Omega^\epsilon_t$ The crack is portrayed in Figure \ref{P}. The projection of the failure set $\Gamma^\epsilon(t)$ onto $\Omega^\epsilon_t$ is given by the grey zone in Figure \ref{P}
which is the union of all peridynamic neighborhoods $H_\epsilon(\xx)$ such that the Lesbesgue measure of the set $\yy$ such the bond between $\yy$ and $\xx$ is broken. 
The failure energy is 
\begin{align}\label{fenergy}
\mathcal{F}^\epsilon(t)=\mathcal{G}_c\mathcal{H}^{1}(R^\epsilon_{t}).
\end{align} 
The set of bonds for which $r^c\leq r(t,\uu)<r^+$ is called the softening zone $SZ^\epsilon(t)$ it corresponds to reversible strain and is not part of the crack.  The softening zone is portrayed in the computational examples.
Noting that pairs of points with $\xx$ above and $\yy$ below the crack passing between them have bond forces that soften to zero elastically before breaking implies that the elastic energy associated with the domain $\Omega_t^\epsilon$  satisfies
\begin{align}\label{elasticenergy2}
\mathcal{E}^\epsilon(t)&=\int_{{\Omega_t}^\epsilon\times{\Omega_t^\epsilon}\cap{EZ^\epsilon(t)}}\,\rho^\epsilon(\yy,\xx)\gamma(\uu)(\yy,\xx,t)g(r(t,\uu))\,d\yy\,d\xx,\nonumber\\
&=\int_{\Omega}\int_{\Omega}\,\rho^\epsilon(\yy,\xx)g(r(t,\uu))\,d\yy\,d\xx.
\end{align}
Note $\gamma(u)(\yy,\xx,t)=1$ in the last integral as bonds break after they soften.


\begin{figure} 
\centering
\begin{tikzpicture}[xscale=0.70,yscale=0.50]

\draw [fill=gray, gray] (-8,-0.5) rectangle (0.0,0.5);






\node[above] at (-4.0,0.5) {$R_t^\epsilon(t)$};

\draw [->,thick] (-4.0,0.8) -- (-4.0,0.0);

\draw[fill=gray, gray] (0,-0.5) arc (-90:90:0.5)  -- cycle;












\draw [-,thick] (-8,0) -- (-0.0,0);




\draw [thick] (-8,-5) rectangle (8,5);












\node [above] at (-1.6,0) {$\epsilon$};

\node [below] at (-1.6,0) {$\epsilon$};

\draw [thick] (-1.3,.5) -- (-1.1,.5);

\draw [<->,thick] (-1.215,.5) -- (-1.215,0);

\draw [<->,thick] (-1.215,-.5) -- (-1.215,0);

\draw [thick] (-1.3,-.5) -- (-1.1,-.5);

\draw [->,thick] (0.0,-1.0) -- (0.0,-0.1);

\node  at (0.5,-1.5) { $\ell^\epsilon(t)$};

\draw [->,thick] (-7,-1.0) -- (-7,-0.1);

\node  at (-7,-1.5) { $\ell(0)$};

\end{tikzpicture} 
\caption{{ \bf The crack $R_t^\epsilon$ and projection of $\Gamma^\epsilon(t)$ onto $\Omega$ given by grey shaded region.}}
 \label{P}
\end{figure}
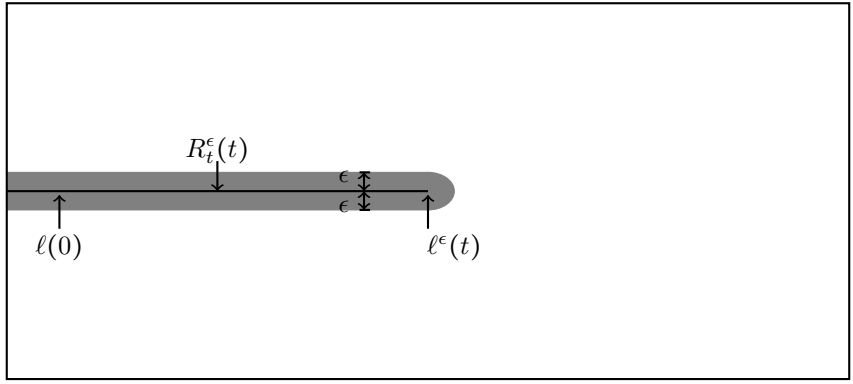

For the rest of the section we make the hypotheses 
\begin{itemize}


\item  (H1) $\sup_{\epsilon>0}\Vert \uu^\epsilon\Vert_{CX}<\infty$

\item (H2) $|SZ^\epsilon(t)|=O(\epsilon^2)$

\end{itemize}
Here $|SZ^\epsilon(t)|$ denotes $d$ dimensional Lebesgue measure of $|SZ^\epsilon|$ and (H2) is consistent with the numerical simulations given in the next section.

Now introduce the Banach space 
\begin{align}\label{Y}
H=\{\uu \in L^2(\Omega,\mathbb{R}^2);\, \int_{\Omega}\,\uu\cdot\vv\,d\xx=0, \hbox{ for all $\vv\in\mathcal{U}$}\}.
\end{align}

The elastic energy is the peridynamic energy $PD^\epsilon(\uu)$ given in  \cite{lipton2016cohesive}, The energy bound \eqref{eq:enegrynergybalance} shows
\begin{align}\label{boundenergpd}
\sup_{\epsilon>0}\{\mathcal{E}^\epsilon(t)\}<\infty.
\end{align}

From  Lemma 6.1 and (6.18) of   \cite{lipton2016cohesive} one has the coercivity
\begin{align}\label{coerse}
\Vert \uu^\epsilon \Vert_H^2 \leq C \mathcal{E}^\epsilon(t)
\end{align}
for some positive constant $C$ so $\uu^\epsilon$ is bounded in $L^2$.
On applying (H1) and \eqref {boundenergpd}  we get hypotheses sufficient for  compactness as stated in 
Theorem 5.1 of \cite{lipton2016cohesive}. This  provides a subsequence of solutions $\{\uu^\epsilon\}$
converging strongly in $C([0,T];H)$ to $\uu^0$ in $C([0,T];H)$.

Applying the Helly selection principle to the sequence $\{\ell^\epsilon(t)\}$ shows there exists a subsequence also denoted by  $\{\ell^\epsilon(t)\}$ converging point-wise  to  a  monotone increasing  continuous and bounded function $\ell^0(t))$ delivering the crack 
$R^0_t=\{0 \leq x_1\leq\ell^0(t),\,x_2=0\}$,  where  $R^0_\tau\subset R^0_t$ for $\tau< t$.  The time dependent domain surrounding the crack is denoted by $\Omega^0_t$ and the failure energy of the limiting crack is
\begin{align}\label{fenergy-0}
\mathcal{F}^0(t)=\mathcal{G}_c\mathcal{H}^{1}(R^0_{t}),
\end{align} 
This gives the limiting displacement-failure pair $(\uu^0,R_t^0)$. 

Next we show that $\uu^0$ is a weak solution to the linear elastic wave equation in $\Omega_t$ with traction free conditions on the faces of the moving crack  $R_t^0$.
To this end recall the Sobolev space $H^1(\Omega;\mathbb{R}^2)$ 
with norm
\begin{equation}\label{h1}
\Vert\ww\Vert_{H^1(\Omega;\mathbb{R}^2)}:=\left(\int_ \Omega\,|\ww|^2\,d\bx+\int_D|\nabla\ww|^2\,d\bx\right)^{1/2}.
\end{equation}
and   $V_t= H^1(\Omega_t;\mathbb{R}^2)\cap H)$.
The Hilbert space dual to $V_t$ is denoted by $V_t^\ast$.  The set of functions strongly square  integrable in time taking values in $V_T^\ast$ for $0\leq t\leq T$ is denoted by $L^2(0,T;V_T^\ast)$.  For future reference we write the symmetric part of $\nabla\ww$ as $\mathcal{E}\ww=(\nabla \ww+\nabla\ww^T)/2$.

The body force given in \eqref{bodyforce} is written as $\bb^{\epsilon}(t)$  and we state the following lemma.
\begin{lemma}\label{convergingrhs}
\cite{liptonjha2021} There is a positive constant $C$ independent of $\epsilon$ and $t\in [0,T]$ such that
\begin{equation}\label{righthandside}
\begin{aligned}
|\langle\bb^{\epsilon}(t),\ww\rangle|
\leq C\Vert\ww\Vert_{V_t},\hbox{ for all $\epsilon>0$ and $\ww \in  V_t$},
\end{aligned}
\end{equation}
where $\langle\cdot,\cdot\rangle$ is the duality paring between $V_t$ and its Hilbert space dual $V^\ast_t$. In addition there exists $\bb^0(t)$ such that $\bb^{\epsilon}\rightharpoonup\bb^0$ in  $L^2(0,T;V_t^\ast)$ and
\begin{equation}\label{righthandsidelimit}
\begin{aligned}
\langle\bb^0(t),\ww\rangle=&\langle\bg(t),\ww\rangle:=\int_{\partial \Omega}\,\bg(t)\cdot\ww\,d\sigma,
\end{aligned}
\end{equation}
for all $\ww \in V_t$, where $\bg(t)$ is defined by \eqref{l2} and $\bg\in H^{-1/2}(\partial\Omega)^2$.
\end{lemma}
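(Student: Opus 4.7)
I propose to prove the lemma in two stages, separating the uniform dual-norm bound from the weak-convergence identification, and exploiting the fact that the load layers $\chi^\epsilon_\pm$ are supported at the top and bottom edges of the rectangle, well away from the evolving crack $R^\epsilon_t$. Consequently, every function $\ww \in V_t = H^1(\Omega_t;\mathbb{R}^2)\cap H$ admits a genuine $H^{1/2}$ trace on the flat sides $\{x_2=\pm b/2\}$, regardless of how $R^\epsilon_t$ moves inside $\Omega$, so a fixed (time-independent) trace constant is available.

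For the uniform bound, I would introduce the slicing identity in the $x_2$ direction. On the upper strip $\theta<x_1<a-\theta$, $b/2-\epsilon<x_2<b/2$, write
\begin{equation*}
\ww(x_1,x_2) \;=\; \ww(x_1,b/2) \;-\; \int_{x_2}^{b/2}\partial_{x_2}\ww(x_1,s)\,ds,
\end{equation*}
plug into $\epsilon^{-1}\int_{\text{strip}^+} g(t,x_1)\,\ww\cdot\be^2\,dx$, and apply Fubini. The leading piece becomes $\int_\theta^{a-\theta} g(t,x_1)\,w_2(x_1,b/2)\,dx_1$, which is $\langle \bg(t),\ww\rangle$ with $\bg$ defined by \eqref{l2}; the remainder is bounded by $\epsilon^{-1}\|g\|_\infty$ times $\int_{\text{strip}^+}\int_{x_2}^{b/2}|\partial_{x_2}\ww|\,ds\,dx$, which Cauchy--Schwarz controls by $\|g\|_\infty\,\epsilon^{1/2}\,\|\nabla\ww\|_{L^2(\Omega_t)}$. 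The leading trace term is bounded by $\|g(t)\|_{H^{1/2}(\partial\Omega)}\|\ww\|_{H^{1/2}(\partial\Omega)}$, and by the trace theorem this is $\leq C\|g\|_{C^0([0,T];H^{1/2})}\|\ww\|_{V_t}$. An identical argument handles the lower strip, yielding \eqref{righthandside} with $C$ independent of $\epsilon$ and $t$.

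For the weak limit, the decomposition above already gives the pointwise-in-$\ww$ statement
\begin{equation*}
\langle \bb^\epsilon(t),\ww\rangle \;=\; \langle \bg(t),\ww\rangle \;+\; O(\epsilon^{1/2})\,\|\ww\|_{V_t},
\end{equation*}
so $\langle \bb^\epsilon(t),\ww\rangle\to\langle\bg(t),\ww\rangle$ for every $\ww\in V_t$ and every $t\in[0,T]$. The remainder estimate is uniform in $t$ because $g$ is bounded smoothly in $(x_1,t)$. To upgrade to weak convergence in $L^2(0,T;V_T^\ast)$, I fix $\boldsymbol{\phi}\in L^2(0,T;V_T)$ (a dense class of test functions) and apply the pointwise limit together with the uniform bound $\|\bb^\epsilon(t)\|_{V_t^\ast}\leq C$ from the first part; Lebesgue's dominated convergence theorem then gives $\int_0^T\langle\bb^\epsilon(t),\boldsymbol{\phi}(t)\rangle\,dt\to\int_0^T\langle\bg(t),\boldsymbol{\phi}(t)\rangle\,dt$. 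Finally, $\bg(t)$ lies in $H^{-1/2}(\partial\Omega)^2$ because $g$ is smooth in $x_1$, so $\bb^0=\bg$ is an admissible element of the dual.

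The only step I expect to require genuine care is the trace argument on the time-dependent cracked domain $\Omega_t$, since $R^\epsilon_t$ could in principle approach $\partial\Omega$. This is precisely where the assumption $\ell^\epsilon(T)<a-\delta$ enters: the crack tip stays a fixed distance from the right edge, so a neighborhood of the top/bottom boundary is uncracked, and the standard $H^1\to H^{1/2}(\partial\Omega)$ trace holds with a constant that depends only on $\delta$ and the geometry of $\Omega$, not on $\epsilon$ or $t$.
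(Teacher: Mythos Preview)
The paper does not supply its own proof of this lemma; it is stated with a citation to \cite{liptonjha2021} and no argument is given in the text. So there is nothing to compare against directly, and your proposal stands on its own merits.

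Your argument is sound and is essentially the natural one. The slicing identity in $x_2$, combined with Cauchy--Schwarz over the $\epsilon$-strip, correctly produces the decomposition $\langle\bb^\epsilon(t),\ww\rangle = \langle\bg(t),\ww\rangle + O(\epsilon^{1/2})\Vert\nabla\ww\Vert_{L^2}$, and the trace-theorem bound on the leading term is legitimate because the crack lies on $\{x_2=0\}$ and never approaches the loading strips; a fixed collar neighborhood of $\{x_2=\pm b/2\}$ is uncracked for all $t$ and all $\epsilon$, so the trace constant is uniform. Your identification of the weak limit via dominated convergence is also correct, since the pointwise-in-$t$ bound $\Vert\bb^\epsilon(t)\Vert_{V_t^\ast}\leq C$ furnishes the required integrable majorant. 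Two minor remarks: the parenthetical ``a dense class of test functions'' is unnecessary---you are already testing against the full predual $L^2(0,T;V_T)$; and the slicing identity for $\ww\in H^1$ should be justified by the a.e.-$x_1$ absolute continuity of vertical slices, which follows from Fubini, but this is routine.
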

The traction force  \eqref{righthandsidelimit}  delivers loading consistent with a mode one crack in the local model given by Linear Elastic Fracture Mechanics (LEFM).

\begin{definition}\label{defweakspaces}
\cite{DalToader} $\mathcal{V}$ is the space of functions $\bv\in  L^2(0,T;\, V_T)\cap H^1(0,T;\,H) $ such that $\bv(t) \in V_t$ for a.e. $t\in (0,T)$. It is a Hilbert space with scalar product given by
\begin{equation}
\label{innerprodspacetime}
(\uu,\vv)_\mathcal{V}=(\uu,\vv)_{ L^2(0,T;\, V_T)}+(\dot\uu,\dot\vv)_{ L^2(0,T;\,H)},
\end{equation}
where $\dot\uu$ and $\dot\vv$ denote distributional derivatives with respect to $t$.
\end{definition}

Following \cite{DalToader} one has
\begin{definition}\label{defweaksoln}
Given $\bg(t)$ defined by \eqref{l2} the displacement $\uu$ is said to be a weak solution of the wave equation on the time changing domain $\Omega_t$ with traction free boundary condition on $R^0_t$ 
\begin{align}\label{formulation}
\begin{cases}
\rho \ddot\uu(t)+{\rm div}(\mathbb{C}\mathcal{E}\uu(t))=0 \\
\mathbb{C}\mathcal{E}\uu(t)\nn=\bg(t), \text{ on }\partial D\\
\mathbb{C}\mathcal{E}\uu(t)\nn=0, \text{ on either side of } R_t^0\\
\bu(t)\in V_t
\end{cases}
\end{align}
on the time interval $[0,T]$ if $\uu\in \mathcal{V}$ and
\begin{equation}
\label{weaksoln}
-\int_0^T\int_D\,\rho\,\dot\uu(t)\cdot\dot\varphi(t)\,d\bx\,dt+\int_0^T\int_D\mathbb{C}\mathcal{E}\uu(t):\mathcal{E}\varphi(t)\,d\xx\,dt=\int_0^T\int_{\partial D}\bg(t)\cdot\varphi(t)\,d\sigma\,dt
\end{equation}
for every $\varphi\in\mathcal{V}$ with $\varphi(T)=\varphi(0)=0$.
\end{definition}

Assuming (H1) and (H2)  as in (2.26) and (3.6) of \cite{liptonjha2021}  delivers:
\begin{theorem}\cite{liptonjha2021} 
\label{zerohorizonweaksoln}
The limit displacement $\uu^0$ is a weak solution of the wave equation on the evolving domain $\Omega_t$ for $t\in[0,T]$ given by  definition \ref{defweaksoln}.
\end{theorem}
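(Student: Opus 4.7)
The plan is to adapt the compactness-and-limit argument developed in \cite{liptonjha2021}. Starting from the nonlocal momentum balance \eqref{eq: linearmomentumbal} for $\uu^\epsilon$, I multiply by a test function $\varphi\in\mathcal{V}$ with $\varphi(0)=\varphi(T)=\bzero$, integrate over $\Omega\times(0,T)$, and integrate by parts in $t$ to obtain the weak identity
\begin{align*}
-\int_0^T\!\int_\Omega \rho\,\dot\uu^\epsilon(t)\cdot\dot\varphi(t)\,d\xx\,dt+\int_0^T\!\int_\Omega \LL^\epsilon[\uu^\epsilon](t)\cdot\varphi(t)\,d\xx\,dt =\int_0^T\langle\bb^\epsilon(t),\varphi(t)\rangle\,dt.
\end{align*}
The goal is to pass $\epsilon\to 0$ term by term and recover \eqref{weaksoln}. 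The energy bound Theorem \ref{energybound} together with (H1) provides uniform $L^2$ bounds on $\dot\uu^\epsilon$, yielding (along a subsequence) $\dot\uu^\epsilon\rightharpoonup\dot\uu^0$ in $L^2(0,T;H)$. Combined with the strong convergence $\uu^\epsilon\to\uu^0$ in $C([0,T];H)$ that was already extracted, the kinetic term passes to the limit immediately. The right-hand side converges by Lemma \ref{convergingrhs} to $\int_0^T\int_{\partial\Omega}\bg(t)\cdot\varphi(t)\,d\sigma\,dt$.

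The substantive work is the elastic term. After symmetrizing in $\yy\leftrightarrow\xx$ using the evenness of $J$, $\gamma$, and the oddness of $\ee$, the elastic term becomes
\begin{align*}
\int_0^T\!\int_\Omega\!\int_{\Omega\cap H_\epsilon(\xx)}\frac{\rho^\epsilon(\yy,\xx)}{\sqrt{|\yy-\xx|}}\gamma(\uu^\epsilon)(\yy,\xx,t)\,g'\!\big(r(t,\uu^\epsilon)\big)\,\big[(\varphi(\yy)-\varphi(\xx))\cdot\ee\big]\,d\yy\,d\xx\,dt.
\end{align*}
I would split the inner $\yy$-integration into three subregions determined by the bond state at time $t$: the failure set $\Gamma^\epsilon(t)$ (where $\gamma=0$, so the integrand vanishes), the softening zone $SZ^\epsilon(t)$ (where $\gamma=1$ but $r\geq r^c$), and the quiescent zone (where $\gamma=1$ and $r<r^c$).

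On the softening zone I would use the uniform bound $\|g'\|_\infty<\infty$ from \eqref{prime} together with the Lipschitz continuity of $\varphi$ (test functions are regular on $\Omega_t$ away from the crack) to bound the integrand pointwise by $C\rho^\epsilon(\yy,\xx)/\sqrt{|\yy-\xx|}\cdot|\yy-\xx|\,\|\nabla\varphi\|_\infty$. The scaling $\rho^\epsilon\sim\epsilon^{-(d+1)}$ together with integration over a spatial set of measure $|SZ^\epsilon(t)|=O(\epsilon^2)$ guaranteed by (H2) forces this contribution to $0$. On the failure set the contribution is identically zero, which is exactly what encodes the traction-free condition on the two faces of $R_t^0$ in the limit, since test functions in $V_t$ are allowed to jump across the crack. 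On the quiescent zone I Taylor-expand $g'(r)=g''(0)\,r+O(r^3)$ using $g'(0)=0$; combined with the smoothness of $\varphi$ this reproduces, in the weak sense, exactly the pointwise computation of Section \ref{quescint}. After the change of variables $\bxi=(\yy-\xx)/\epsilon$ the leading order term converges to $\int_0^T\int_{\Omega_t^0}\mathbb{C}\mathcal{E}\uu^0:\mathcal{E}\varphi\,d\xx\,dt$ with $\mathbb{C}$ given by \eqref{elasticpart3}, while the remainder is controlled by (H1) and dropped. These convergences together with the limit crack $R_t^0$ and the initial-data identification place $\uu^0$ in $\mathcal{V}$ with $\uu^0(t)\in V_t$ and yield \eqref{weaksoln}.

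The principal obstacle is Step~5 (the quiescent-zone identification), because the only available convergence on $\uu^\epsilon$ is strong in $H$ (not in $H^1$), so one cannot pass the limit pointwise inside $g'(r(t,\uu^\epsilon))$. The resolution, following \cite{liptonjha2021}, is to transfer the nonlocal difference quotient onto the smooth test function $\varphi$ via the symmetrization above, so that passage to the limit reduces to convergence of bilinear forms in $\uu^\epsilon$ against fixed smooth kernels, for which strong $L^2$-convergence suffices. A secondary delicate point is that the softening-zone contribution must be controlled despite the singular kernel scaling; this is exactly where the geometric hypothesis (H2) is essential.
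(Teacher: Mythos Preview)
The paper does not supply its own proof of this statement; it simply records the theorem and cites \cite{liptonjha2021}, having first checked that (H1), (H2), the energy bound \eqref{boundenergpd}, and the compactness from \cite{lipton2016cohesive} place the problem within the hypotheses (2.26) and (3.6) of that reference. Your sketch is a faithful outline of the argument carried out there: the weak-form identity obtained by testing \eqref{eq: linearmomentumbal} and integrating by parts in $t$, the symmetrization that transfers the nonlocal difference quotient onto the smooth test function, the three-zone decomposition (failure set, softening zone, quiescent zone), and the Taylor expansion of $g'$ about $0$ on the quiescent zone all match the structure of \cite{liptonjha2021}.

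One place where the cited proof does more than your sketch indicates is the remainder control on the quiescent zone. The estimate $g'(r)-g''(0)r=O(r^3)$ is not made small by (H1) alone, since (H1) bounds $\|\uu^\epsilon\|_{L^\infty}$ but not the two-point strain $S$, and $r=\sqrt{|\yy-\xx|}\,S$ need not be small pointwise. In \cite{liptonjha2021} the vanishing of the cubic remainder uses the uniform elastic-energy bound \eqref{boundenergpd} to show that the $\rho^\epsilon$-weighted set of bonds with $|r|$ bounded away from zero carries small measure, while on its complement $r\to 0$ because $|\yy-\xx|\leq\epsilon$. You correctly flag this as ``the principal obstacle,'' so the omission is one of detail rather than of strategy.
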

\noindent The elasticity tensor $\mathbb{C}$ appearing in Definition \ref{defweaksoln}  is given by \eqref{elasticpart3} with elastic constants given by \eqref{lambdamu}.
A stronger version of the  traction free condition on the sides of the crack is given in  \cite{liptonjha2021}.

\section{Calibration of the model to known material properties}
\label{nond}

We begin this section showing the relationship between the dimensional and dimensionally reduced form of  the equation of dynamics.  We then show how to calibrate the model and conclude with an upper limit on the horizon size for numerical modeling. 

Let $L$ be a characteristic length associated with the domain $\bar{\Omega}=L\Omega$ and, $\bar{\xx}=L\xx$, $\bar{\yy}=L\yy$ and $\bar{r}=L^{1/2}r$. Let the horizon with dimensions of length be given by $\bar{\epsilon}=L\epsilon$.   Let $T_0$ be a characteristic time scale and $\bar{t}=T_0t$ and let $\bar{\rho}$ denote mass per unit volume given by $M/L^d$. The displacement with units of length is $\bar{\uu}=L\uu$. Time derivatives are related to nondimensional time $t$ by $\partial_{\bar{t}}=T_0^{-1}\partial_t$.

We set
\begin{equation}\label{eq:scalllle}
c(\bar{\yy},\bar{\xx},\bar{\epsilon},\epsilon):=\frac{\chi_{\bar{\Omega}}(\bar{\yy})J^{\bar{\epsilon}}(|\bar{\yy}-\bar{\xx}|)}{\epsilon V_d^{\bar{\epsilon}}},
\end{equation}
where $\chi_{{\bar\Omega}}$ is the characteristic function of $\bar\Omega$ and $V_d^{\bar{\epsilon}}=\bar{\epsilon}^d\omega_d$. We emphasize that in the denominator of \eqref{eq:scalllle} we have retained the non dimensional factor $\epsilon$ and note $V_d^{\bar{\epsilon}}=L^d\,V_d^\epsilon$.
The bond potential has dimensions of energy $ML^2T_0^{-2}$ is given by $\bar{g}(\bar{r})=g_\infty g(L^{-1/2}\bar{r})$ where $g$ is the dimensionless bond potential and $g_\infty$ has dimensionss  $ML^2T_0^{-2}$.  Recall, the damage factor $\gamma$ is dimensionless,  $0\leq \gamma\leq 1$ and since  $\bar{r}=L^{1/2}r$ the damage factor  satisfies $\gamma(\bar{\uu})(\bar{\yy},\bar{\xx},\bar{t})=\gamma({\uu})({\yy},{\xx},{t})$.

The relationship between the dimensional and nondimensional form of the linear balance of momentum is now established.
We discuss the $d=3$ case first and comment on the specialization to the plane strain case afterwards.
The nonlocal nondimensional force density defined in Section \ref{sec:nonlocal:model}  is $\LL^\epsilon[\uu](t,\xx)$ and is defined for all points  $\xx$ in $\Omega$  and is given by
\begin{align}
      \label{eq: forccce}
 \LL^\epsilon [\uu](t,\xx) = -\int_{\Omega} \frac{2\rho^\epsilon(\yy,\xx)}{\sqrt{|\yy-\xx|}}\gamma(\uu)(\yy,\xx,t)g'(r(t,\uu))\boldsymbol{e} \,d\yy.
\end{align}

The material is assumed to be homogeneous with nondimensional density $\rho$ and the dimensionally reduced balance of linear momentum for each point $\xx$ in the body $\Omega$ is given by
\begin{align}\label{eq: linearmmmomentumbal}
\rho\ddot{\uu}(t,\xx)+ \LL^\epsilon [\uu](t,\xx) =\bb(t,\xx),
\end{align}
where $\bb(t,\xx)$ is a prescribed dimensionless body force density.

The nonlocal force density $\LL^{\bar{\epsilon}}[\bar{\uu}](\bar{t},\bar{\xx})$ defined for all points  $\bar{\xx}$ in $\bar\Omega$  is given by
\begin{align}
      \label{eq: forcccce}
 \LL^{\bar{\epsilon}}[\bar{\uu}](\bar{t},\bar{\xx}) = -\int_{\bar{\Omega}} \frac{2c(\bar{\yy},\bar{\xx},\bar{\epsilon},\epsilon)}{\sqrt{|\bar{\yy}-\bar{\xx}|}}\gamma(\bar{\uu})(\bar{\yy},\bar{\xx},\bar{t})\bar{g}'(\bar{r}(\bar{t},\bar{\uu}))\boldsymbol{e} \,d\bar{\yy}.
\end{align}
To see that $ \LL^{\bar{\epsilon}}[\bar{\uu}](\bar{t},\bar{\xx}) $ is a force density with units units $MLT^{-2}_0/L^3$,  note that $$\bar{g}'(\bar{r})=\partial_{\bar{r}}\bar{g}(\bar{r})=\partial_{\bar{r}}g_\infty g(L^{-1/2}\bar{r})=L^{-1/2}g_\infty g'(L^{-1/2}\bar{r}),$$
and $\chi_{\bar\Omega}(\bar{\xx})=\chi_{\bar\Omega}(L{\xx})=\chi_{\Omega}({\xx})$,  $J^{\bar{\epsilon}}(|\bar{\yy}-\bar{\xx}|)=J^{{\epsilon}}({|\yy}-{\xx}|)$  hence
\begin{align}\label{ratiotimesforce}
\LL^{\bar\epsilon} [\bar{\uu}](\bar{t},\bar{\xx}) = \frac{g_\infty}{L^{4}}\LL^\epsilon[\uu](t,\xx).
\end{align}
Since $g_\infty/L^4=MLT_0^{-1}/L^3$  we conclude from \eqref{ratiotimesforce} that  the nonlocal force density has units given by $MLT_0^{-2}/L^3$. 
Collecting results shows that the balance of linear momentum for each point $\bar{\xx}$ in the body $\bar{\Omega}$ is given by
\begin{align}\label{eq: linearmmmmmomentumbal}
\bar{\rho}\ddot{\bar{\uu}}(\bar{t},\bar{\xx})+ \LL^{\bar\epsilon} [\bar{\uu}](\bar{t},\bar{\xx}) =\bar{\bb}(\bar{t},\bar{\xx}),
\end{align}
where $\bar{\bb}(\bar{t},\bar{\xx})$ is a prescribed body force density and each term in \eqref{eq: linearmmmmmomentumbal} has the dimensions of force density given by $MLT_0^{-2}/L^3$.  We observe that dividing \eqref{eq: linearmmmmmomentumbal} by $\frac{g_\infty}{L^{4}}$  delivers \eqref{eq: linearmmmomentumbal}. The nondimensional density is given by
\begin{align}\label{rhonondim}
\rho=\frac{\bar{\rho}L^2}{g_\infty T_0^2}.
\end{align}


The $d=2$ case corresponds to plane stress loading in a plate of thickness $\bar{B}$ with cross section $\bar{S}$ and $\bar{\Omega}=\bar{B}\times \bar{S}$.  For this case  $\bar{S}$ has a characteristic length $L$ and we write $\bar{B}=LB$ where $B$ is dimensionless. Each term in \eqref{eq: linearmmmmmomentumbal} corresponds again to $MLT_0^{-2}/L^3$ but all terms are interpreted as force per unit thickness divided by area. 

Material properties given by tabulated data can be used to calibrate the nonlocal model.
Calculation shows that for $d=3$ that
\begin{equation}\label{3dstart}
\bar{\mathcal{G}}_c={g_\infty\mathcal{G}_c}/{L^2}, \qquad \bar{\mu}={g_\infty\mu}/{L^3},
\end{equation}
and for $d=2$,
\begin{equation}\label{2dstart}
\bar{\mathcal{G}}_c={g_\infty\mathcal{G}_c}/{L\bar{B}}, \qquad \bar{\mu}={g_\infty\mu}/{L^2\bar{B}}. 
\end{equation}
The bond strength $g'(r^c)$ is related to the failure strength $\sigma_F$ of the material through
\begin{align}\label{strengthhh}
\sigma_F=\frac{g_\infty g'(r^c)}{L^3}.
\end{align}
We set $g_\infty$ equal to one unit of energy and applying
equations \eqref{3dstart}, \eqref{2dstart}, \eqref{strengthhh},
\eqref{epsilonfracttough}, and \eqref{lambdamu} deliver the system of equations for calibrating $C^+$, $g''(0)$, and $g'(r^c)$  in terms shear moduli,  critical energy release rate and strength of a given material.  

When considering fracture nucleation the horizon $\epsilon$ should lie below the minimum  radius of curvature of an included void or reentrant boundary.   
Here as in~\cite{DiehlLiptonSchweitzer,BhattacharyaLipton}, the strength can be used to determine the maximum horizon size.  On applying the Griffith criterion the horizon $\epsilon$ is constrained above by 
\begin{align}
    \label{eq:griffith}
\epsilon\leq \frac{\bar{\mathcal{G}}_c \bar{E}}{\pi \sigma_F^2 }.
\end{align}


\section{Simulations and Maximum Energy Dissipation}
\label{sec:simulations}

In this section, we carry out numerical simulation of a crack in a rectangular domain (see \Cref{fig:straight-diagram}). 
The critical strain $S^c(\yy,\xx)=r^c/\sqrt{|\yy-\xx|}$ is the strain value for which the bond force between $\yy$ and $\xx$ becomes unstable and the bond force decreases with increasing tensile strain. Neighborhoods $H_\epsilon(\xx)$ with two point strain $S(\yy,\xx,\uu(t))$ below $S^c(\yy,\xx)$ belong to the intact material and bonds for with two point strain is below $S^c(\yy,\xx)$ are in the strength domain of the material \cite{bhattaLiptonMMS}. The elastic strain energy defined for each point inside the intact material is given by
\begin{align}\label{energydensity4}
W^\epsilon(\xx,\uu)=\int_\Omega\,\rho^\epsilon(\yy,\xx)g(r(t,\uu))\,d\yy.
\end{align}
Strain concentration within the intact material is assessed using the maximum value of bond strain relative to the critical strain $S^c(\yy,\xx)$ among all bonds connected to $\xx$ given by 
\begin{align}\label{Z}
   Z(\xx,\uu) = \max_{\yy\in H_\epsilon(\xx)}\left(\frac{r(t,\uu)}{r^c} \right)=\max_{\yy\in H_\epsilon(\xx)}\left(\frac{S(\yy,\xx,\uu(t))}{S^c(\yy,\xx)} \right).
\end{align}
The quantity $Z(\xx,\uu)$ is referred to as the strain concentration. The value of $Z(\xx,\uu)$ is one or less in the intact material.
In this Section we display simulation results for prescribed loading showing crack path, strain concentration $Z(\xx,\uu)$, and strain energy density $W^\epsilon(\xx,\uu)$ at different times. In all simulations we use the degradation factor given by \eqref{dfs2}. To illustrate ideas we assume in the simulations that bonds break instantly and $S^D-S^+$ is set to zero. 
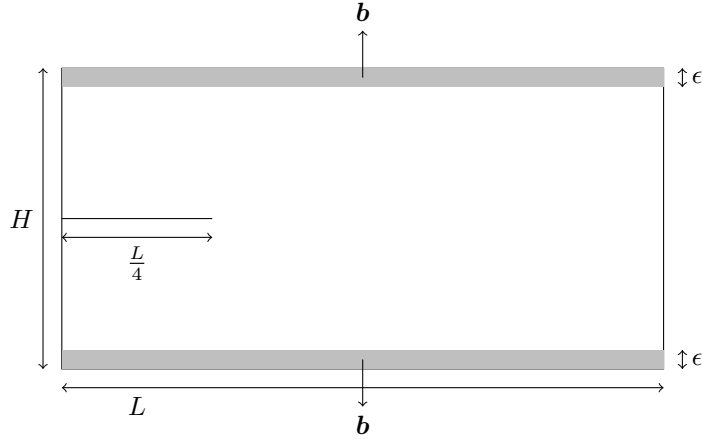
\begin{figure}[tb]
    \centering
    \begin{tikzpicture}
    \draw (0,0) rectangle (8,4);
    \draw (0,2) -- (2,2);
    \fill [lightgray] (0,0) rectangle (8,0.25);
    \fill [lightgray] (0,3.75) rectangle (8,4);
    \draw[<->] (0,-0.25) -- (8,-0.25);
    \node[below] at (1,-0.25) {$L$};
    \draw[<->] (-0.25,0) -- (-0.25,4);
    \node[left] at (-0.25, 2) {$H$};
    
    \draw[<->] (0,1.75) -- (2,1.75);
    \node[below] at (1, 1.75) {$\frac{L}{4}$};
    
    \draw[<->] (8.25,3.75) -- (8.25,4);
    \node[right] at (8.25, 0.125) {$\epsilon$};
    \draw[<->] (8.25,0) -- (8.25,0.25);
    \node[right] at (8.25, 3.875) {$\epsilon$};
    
    \draw[->] (4,0.125) -- (4,-0.5);
    \node[below] at (4, -0.5) {$\bb$};
    \draw[->] (4,3.875) -- (4,4.5);
    \node[above] at (4, 4.5) {$\bb$};
    \end{tikzpicture}
    \caption{Rectangular domain with a horizontal pre-notch}
    \label{fig:straight-diagram}
\end{figure}

\begin{figure}[ht!]
    \centering
    \includegraphics[width=0.99 \linewidth]{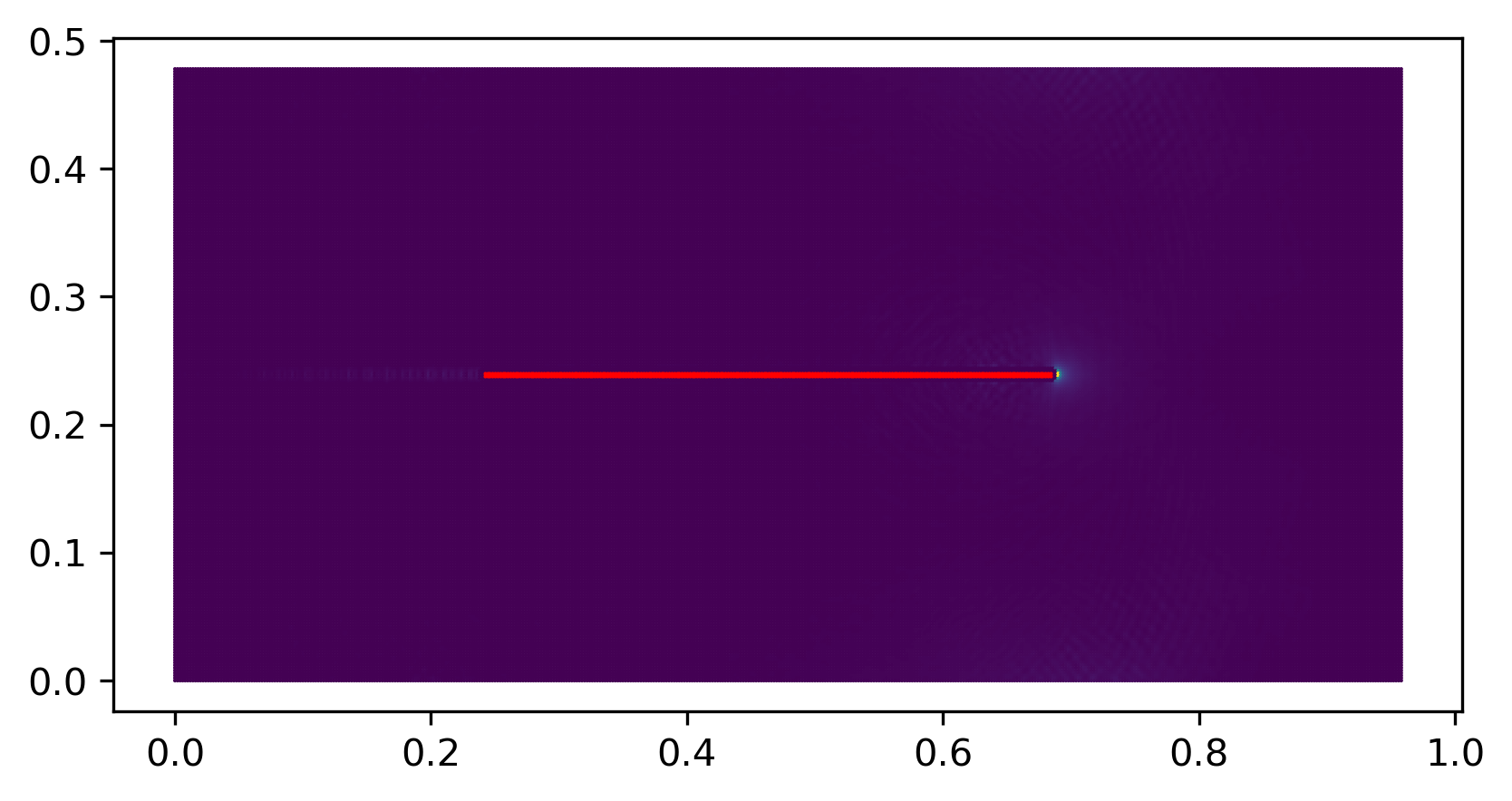}
    \includegraphics[width=0.99 \linewidth]{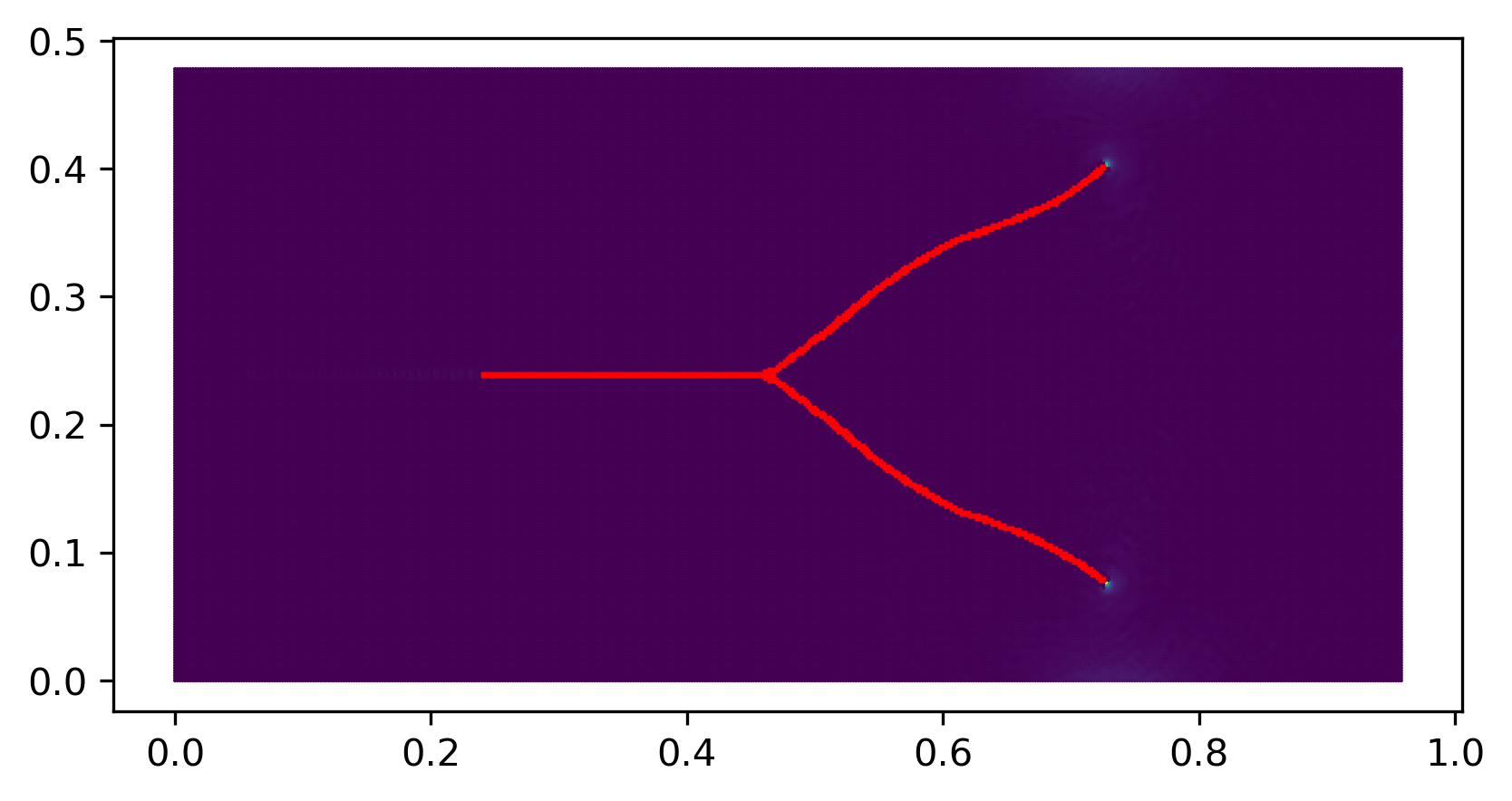}
    \caption{Straight crack and a bifurcated crack due to external body force densities 0.2 GPa and 0.3 GPa, respectively. The red color represents the computed crack.}
    \label{fig:straight-sim}
\end{figure}

Our simulations are carried out for the loading considered in Section \ref{vanish}. For this case, it is shown that cracks  travel along a straight line and for larger loading they bifurcate \cite{ChandarKnauss1984}, \cite{BobaruZhang}. The simulations of Section \ref{setup} reveal that the crack travels along a straight line behind a localized strain concentration as considered theoretically in Section \ref{vanish}. For larger loading the crack is seen to bifurcate. Observational evidence and rational for bifurcation are experimentally and theoretically identified in \cite{ChandarKnauss1984}. We examine strain energy density inside the intact material before and after branching. Before and after the crack branches the crack path is seen to follow
the maximum strain energy of intact material taken over the domain. The simulations explicitly show it to be in front of the crack tip. This is in accord with the experiments of \cite{rozen2020fast} showing that crack path is determined by maximal energy density dissipation.

\subsection{Simulation setup and results}
\label{setup}
We consider a rectangular domain of length $L_0=960$ mm and width   $L_0/2=$ 480 mm with a horizontal pre-notch of length $\frac{L_0}{4}=240$ mm starting at the left edge of the domain (see Fig. \ref{fig:straight-diagram}). 
An external body force density of $\bar{\bb} = 0.2$ GPa and $0.3$ GPa are applied to the top and and bottom $\bar{\epsilon}$-strip of the domain.  The initial displacement and velocity are set to zero.
The domain is discretized uniformly with mesh size $\bar{h}=2$ mm.
The peridynamic horizon size is taken to be $\bar{\epsilon} = 6$ mm.
The material properties are listed in Table \ref{tab:material}.
\begin{table}[]
    \centering
    \begin{tabular}{c|c}
Young's modulus ($E$)     &  72 GPa \\
Critical energy release rate ($G_c$)     & 135 J/m$^2$
\\
Density ($\rho$) & 2440 kg/m$^3$
\\
Poisson ratio ($\nu$) & 0.33
\end{tabular}
    \caption{Material parameters used in simulations}
    \label{tab:material}
\end{table}

The nonlinear potential function considered here is given by
\begin{align*}
    {g}(r) = {C}( 1 - e^{-{\beta} {r}^2})
\end{align*}
where ${C}, {\beta} > 0$.
The influence function is taken to be
\begin{align*}
    J(r) = 1 - r.
\end{align*}
Given the shear modulus $\mu$ and the critical energy release rate $\mathcal{G}_c$, in dimension $d=2$,  \eqref{lambdamu}, \eqref{epsilonfracttough} give
\begin{align}
    {C} = 3 \pi \mathcal{G}_c,\ {\beta} = \frac{16 \mu}{\pi {\mathcal{G}_c}},\ r^c = \frac{1}{\sqrt{2 {\beta}}}.
\end{align}
Note that, the model parameters ${\beta}$ and ${C}$ are independent of the peridynamic horizon size ${\epsilon}$. This model is simple to use and theoretically breaks bonds only when the displacement ceases to be H\"older continuous (with H\"older exponent 1/2). Since $g'(r)$ decays to zero rapidly after $r = r^c$, we choose to break bonds stretched beyond $3r^c$. For this case bonds sustain $5\%$ force relative to the maximum force.

The numerical simulation is performed using the velocity-Verlet scheme with time step size $\Delta t = 5 \times 10^{-7}$ s.
At the start of the simulation, all peridynamic bonds intersecting with the pre-notch are removed.
For a body force density of $0.2$ GPa, the crack propagates in a straight line. For a larger body force density $0.3$ GPa, the crack initially grows in a straight line until it branches at time $t = 535$ $\mu$s.
The branching behavior is consistent with dynamic simulations seen in \cite{ha2010studies}.
The snapshots of the simulations at t = 900 $\mu$s are shown in  \Cref{fig:straight-sim}, where the red color represents the computed crack.
The maximum strain energy density in the domain is observed to be in front of the crack tip, leading the direction of crack growth \cite{rozen2020fast}. 
The vertical displacement of all points in the domain at t = 450 and 700 $\mu$s are shown in \Cref{fig:uy}. Here, the jump set of the displacement field denotes the crack path.
In \Cref{fig:contour-w}, we show
the contour lines of the strain energy density field directly in front of the crack tip. For the straight crack, this is shown at two different crack lengths. For the bifurcating crack, we show the contour lines before and after the crack is bifurcated. The contour lines associated with the stress concentration $Z(\xx,\uu)$ are shown in \Cref{fig:contour-z}.

\section{Conclusion}

The free discontinuity problem for fracture mechanics is investigated  and  a nonlocal model is proposed that  preserves energy balance.
An explicit formula for the energy necessary for material failure is obtained.   
It is observed that conditions for which damage occurs follows directly from \eqref{eq:processzonepowerbalance}, see Lemma \ref{damagecondition}  and is expressed as:
If the rate of energy put into the system exceeds the material's capacity to generate kinetic and elastic energy through  displacement and velocity, then damage occurs.
For flat cracks the failure energy is the classic Griffith energy given by the critical energy release rate and the area of the crack. 
Proceeding theoretically an  $\epsilon$ parametrized family of solutions to the nonlocal boundary value problem  associated with flat crack propagation with $\epsilon>0$ are considered. In the limit of vanishing nonlocality $\epsilon=0$ the elastic displacement field is shown to be a solution of the linear elastic wave equation outside a propagating traction free crack. The theoretical treatment shows that the numerical simulation for the $\epsilon>0$ case will converge to the $\epsilon=0$ evolution (assuming perfectly accurate computation).

Simulations show that cracks follow the location of maximum energy dissipation inside the intact material. The cracks appear as traction free internal boundaries  and both simulation and theory show that they form the wake behind a moving strain concentration consistent with Linear Elastic Fracture Mechanics. 


\section*{Acknowledgments}
This material is based upon work supported by the U. S. Army Research Laboratory and the U. S. Army Research Office under Contract/Grant Number W911NF-19-1-0245. 
Portions of this research were conducted with high performance computing resources provided by Louisiana State University (http://www.hpc.lsu.edu). 

\begin{figure}[ht!]
    \centering
    \includegraphics[width=0.49 \linewidth]{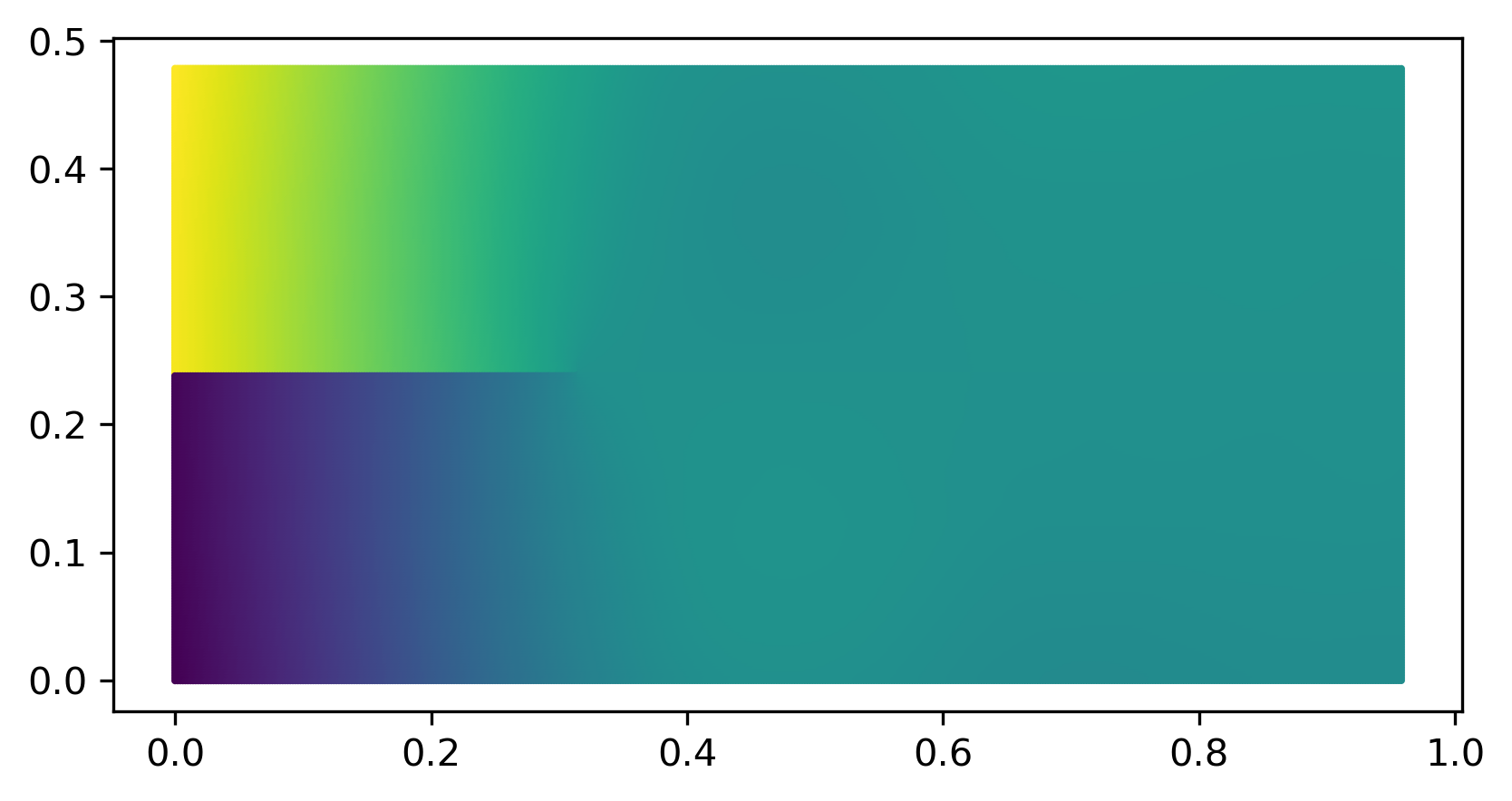}
    \includegraphics[width=0.49 \linewidth]{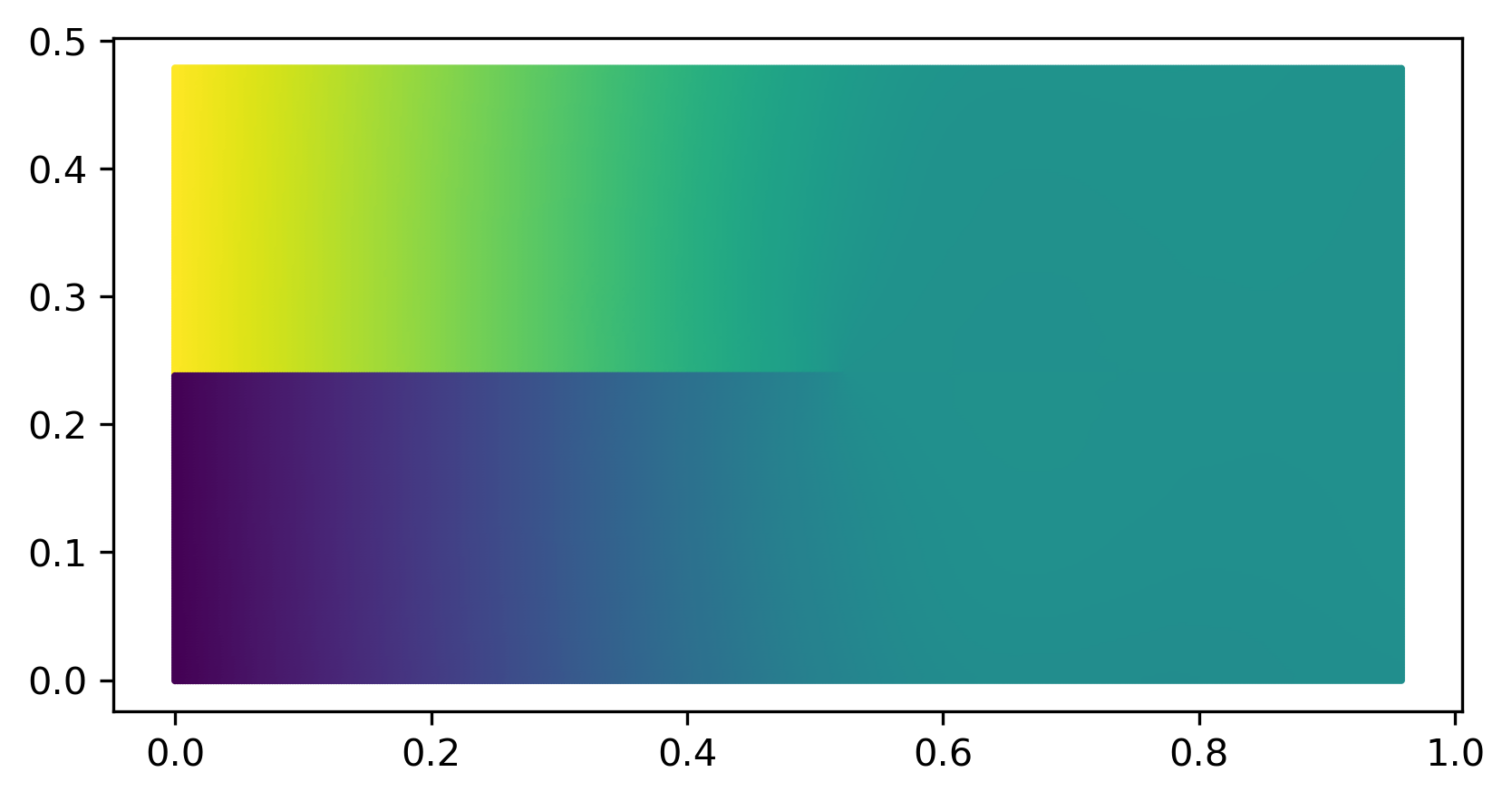}
    
    \includegraphics[width=0.49 \linewidth]{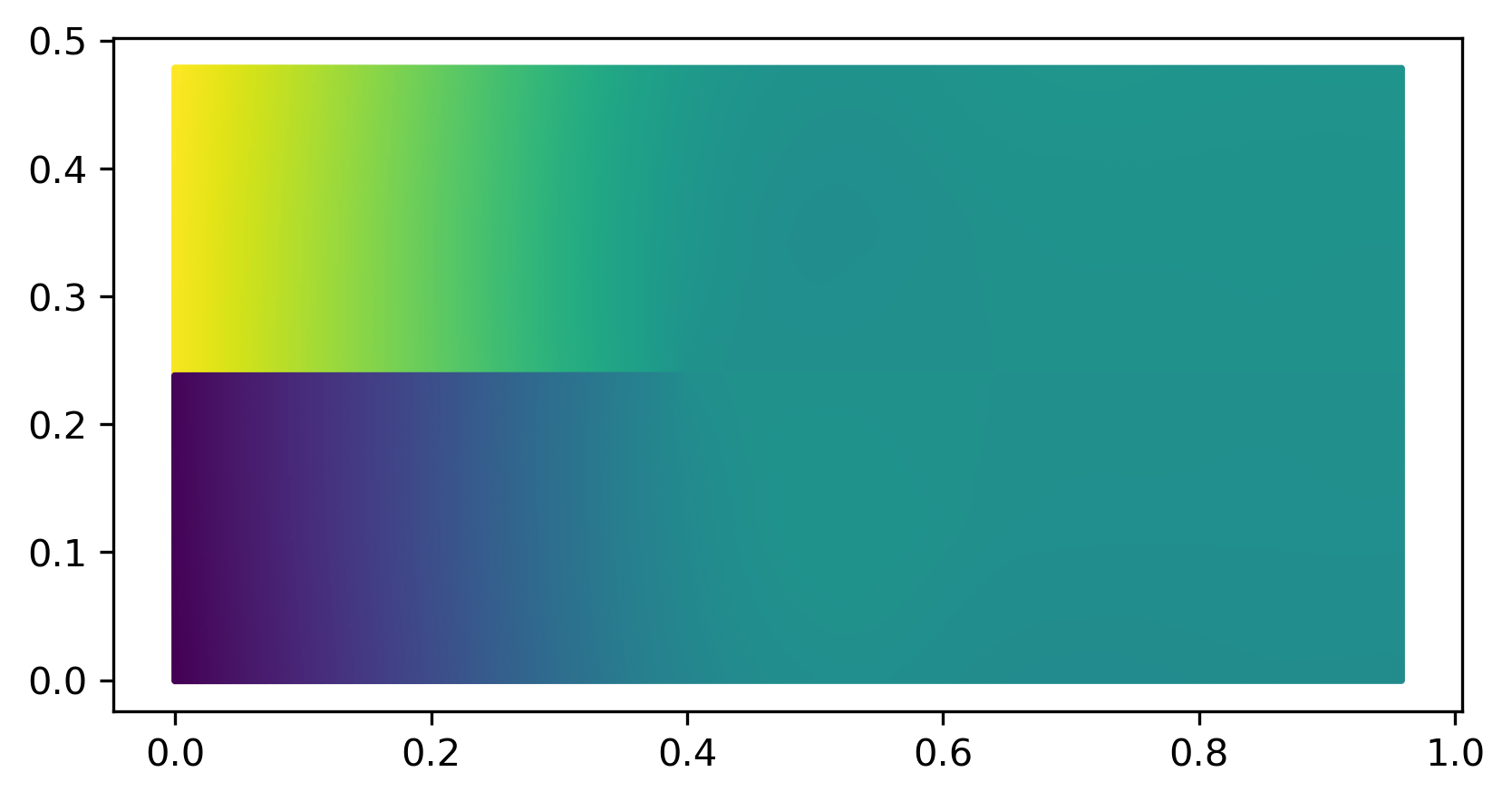}
    \includegraphics[width=0.49 \linewidth]{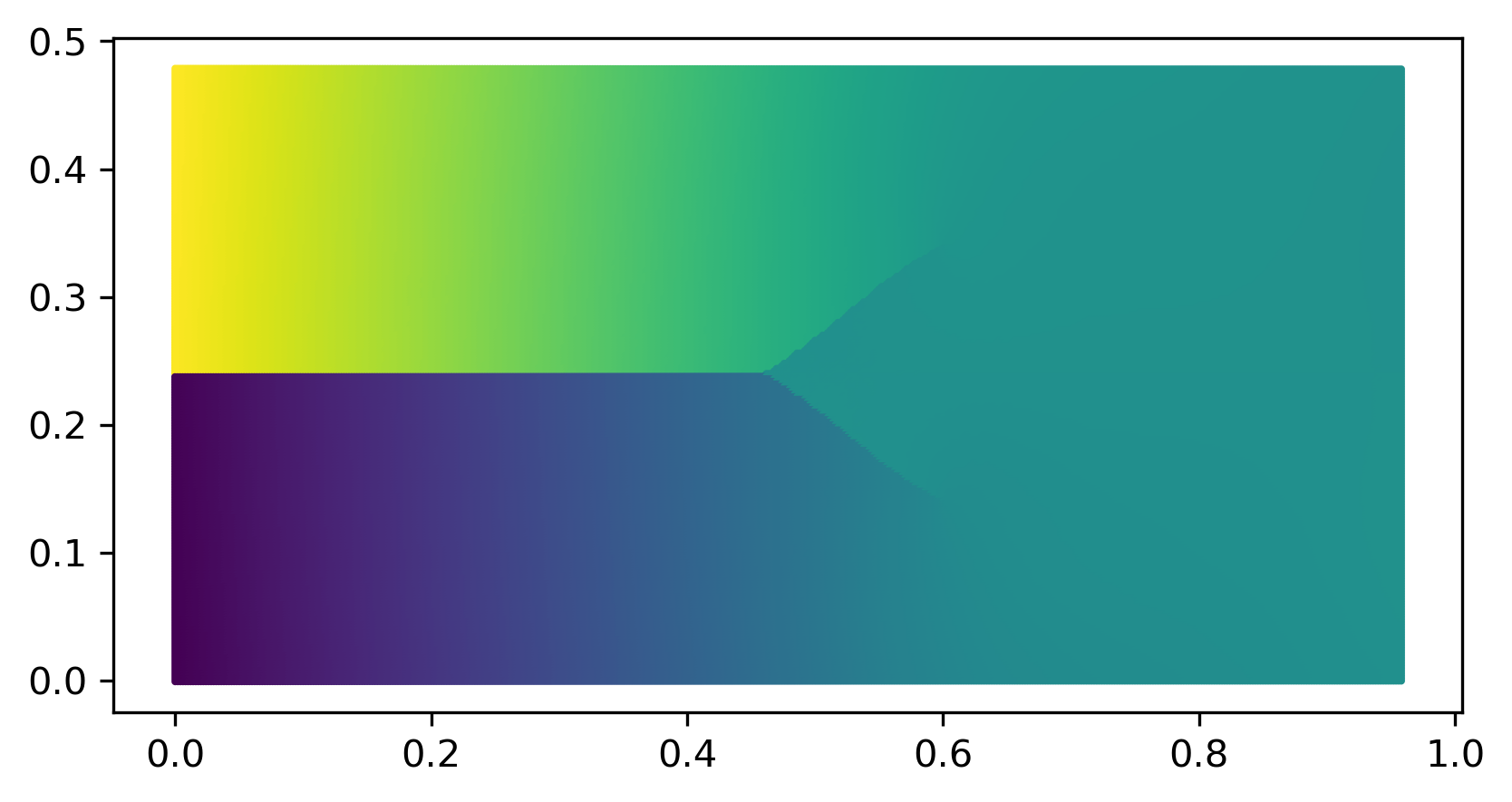}
    \caption{
    Displacement $u_y$ is shown for two cracks at t = 450, 700 $\mu$s. 
    The jump set in the displacement field reveals the evolving crack.}
    \label{fig:uy}
\end{figure}

\begin{figure}[ht!]
    \centering
    \includegraphics[width=0.49 \linewidth]{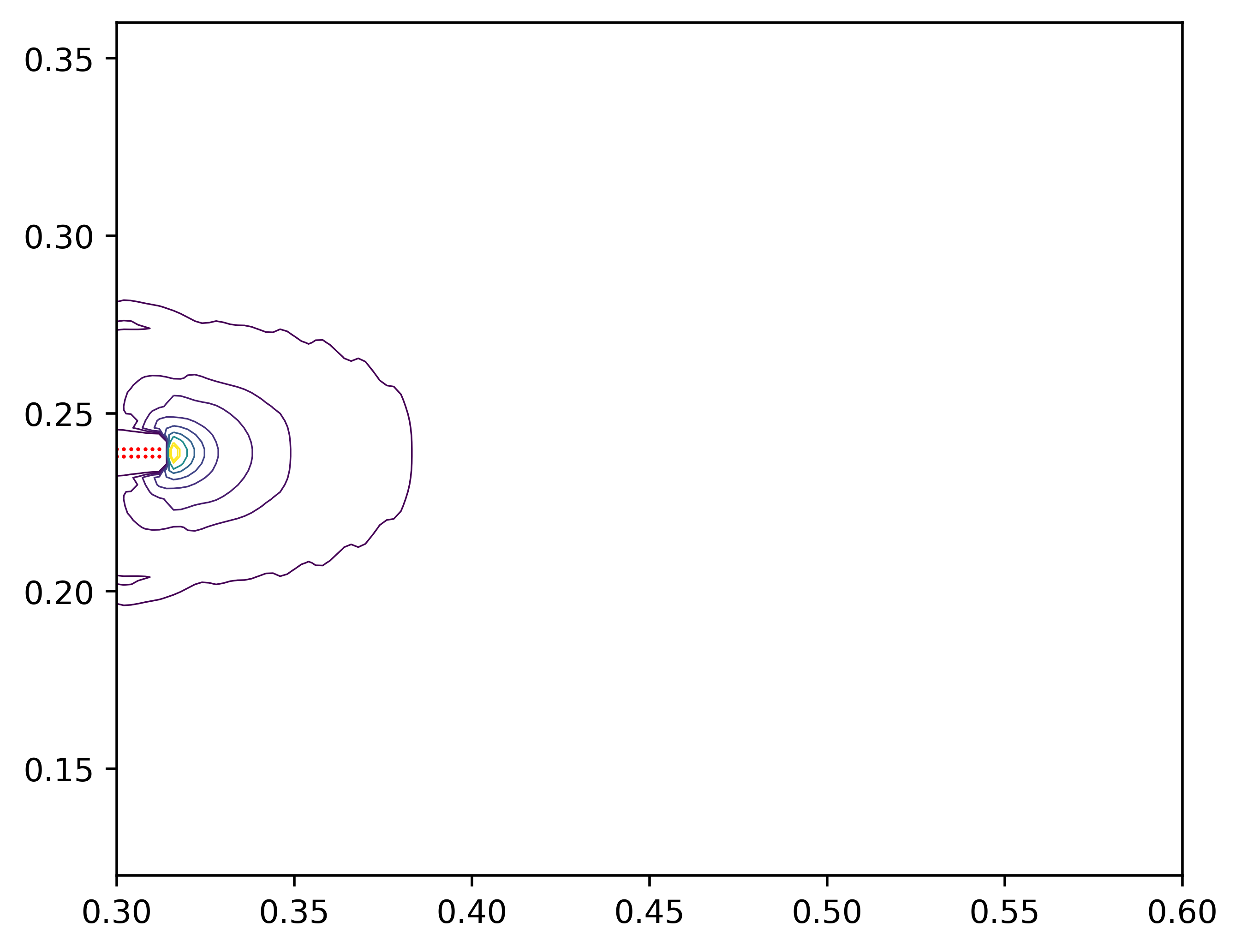}
    \includegraphics[width=0.49 \linewidth]{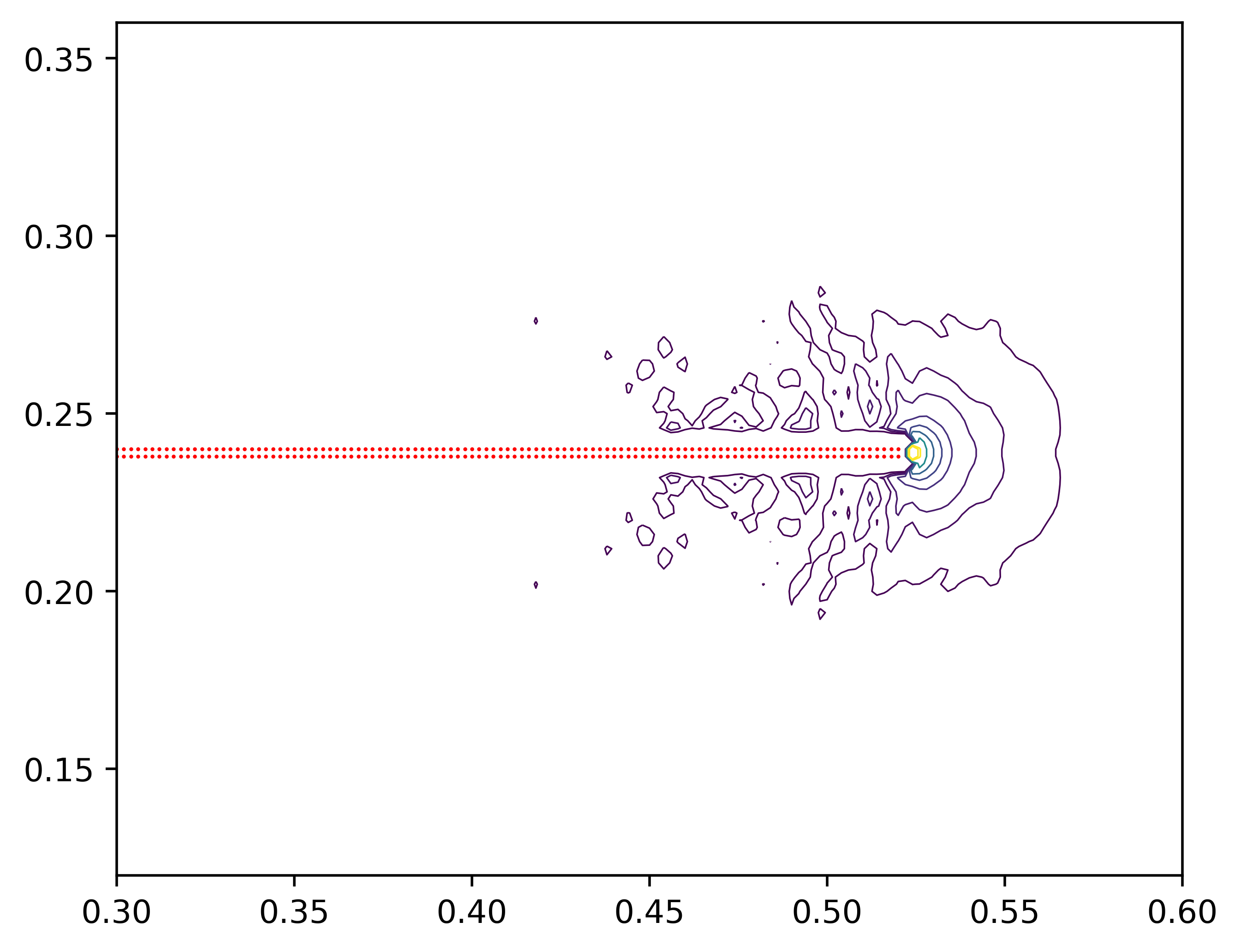}
    
    \includegraphics[width=0.49 \linewidth]{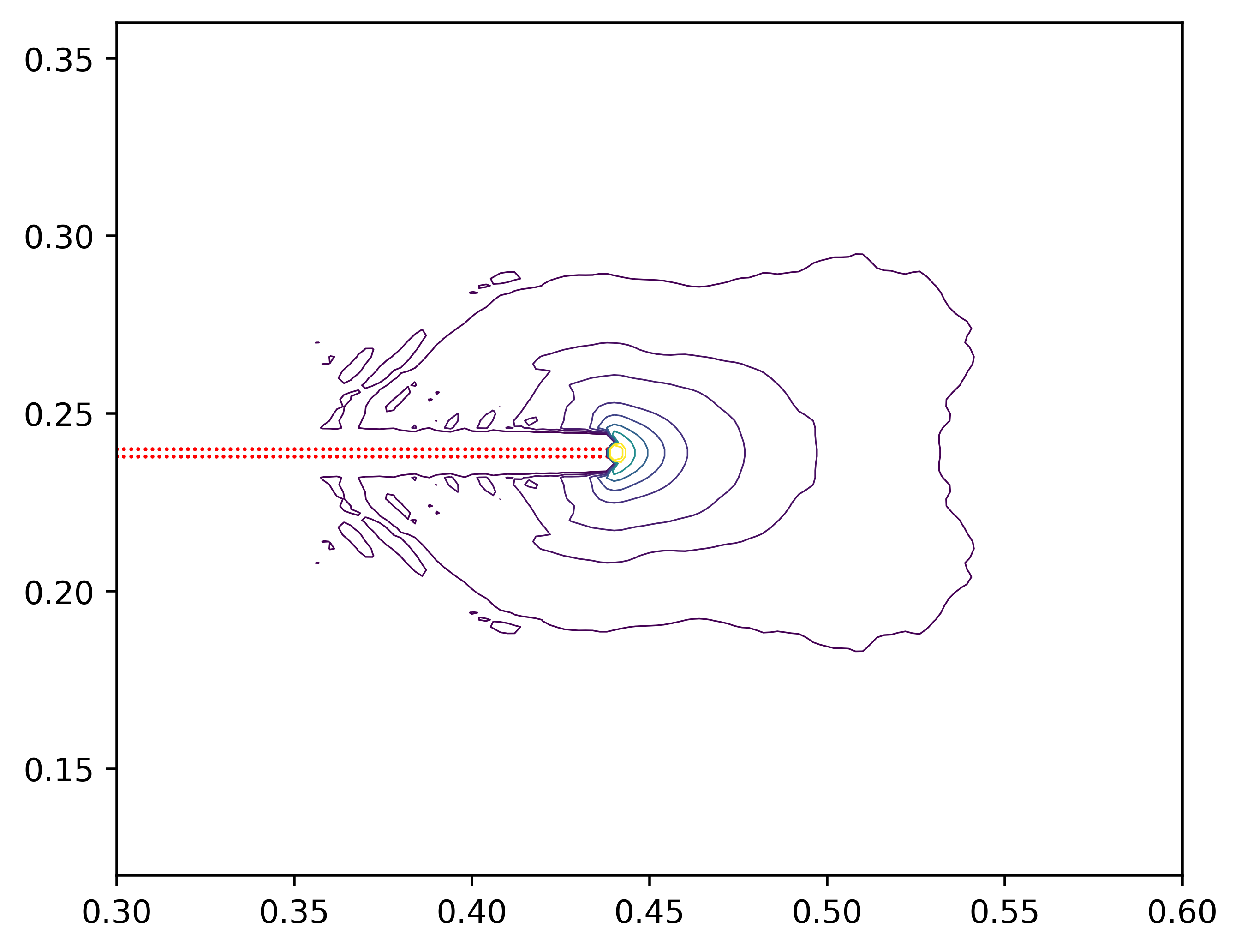}
    \includegraphics[width=0.49 \linewidth]{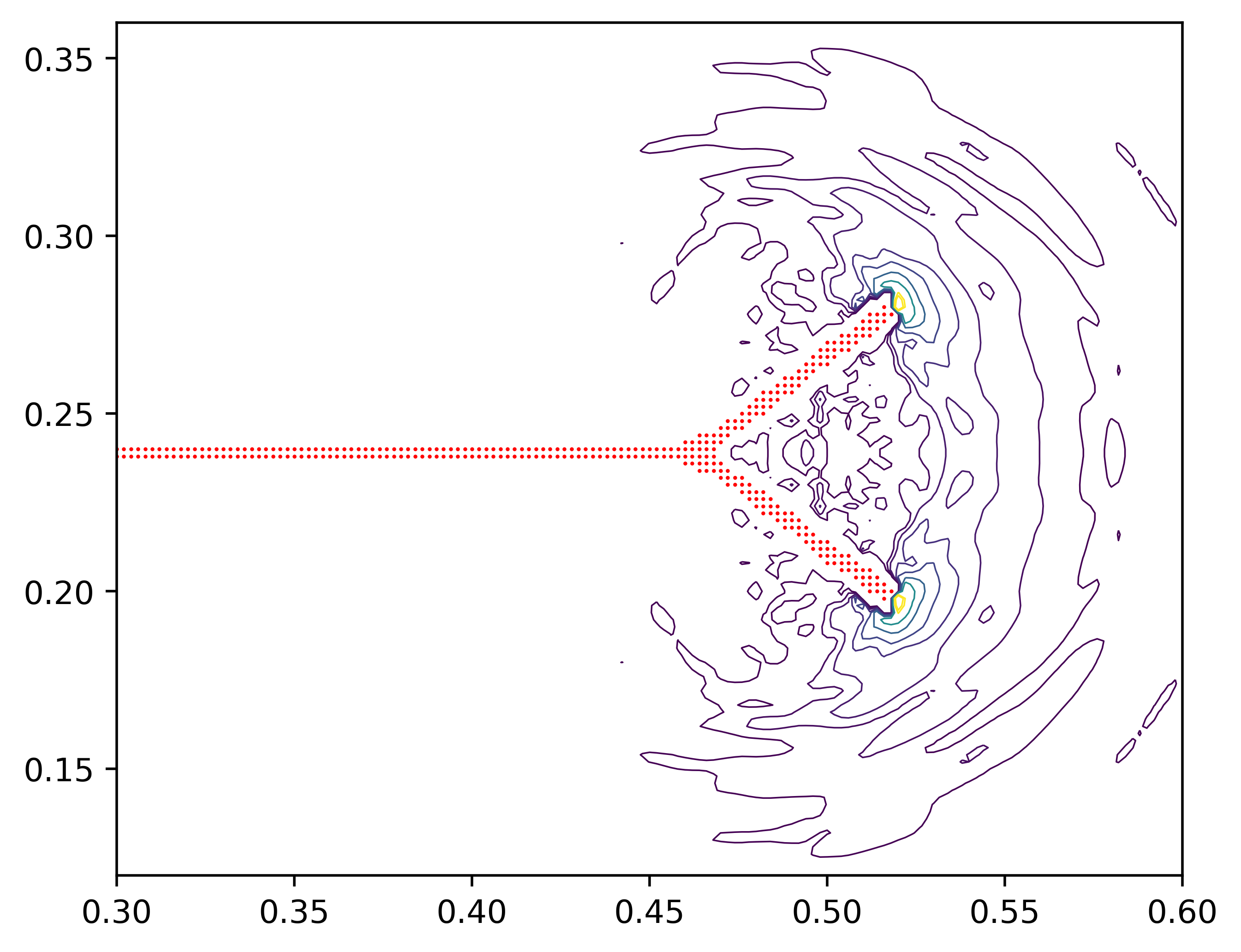}
    
    \caption{The contour lines of strain energy density $W^\epsilon(\xx,\uu)$ in front of the crack tip are shown. For the straight crack, snapshots at time t = 450, 700 $\mu$s are shown. For the bifurcating crack, we show the snapshots at time $t = 500, 600 \mu$s. The red color represents the crack path. Level curves of strain energy density over the entire domain show maximum strain energy is achieved directly in front of the crack tip.}
    \label{fig:contour-w}
\end{figure}

\begin{figure}[ht!]
    \centering
    \includegraphics[width=0.49 \linewidth]{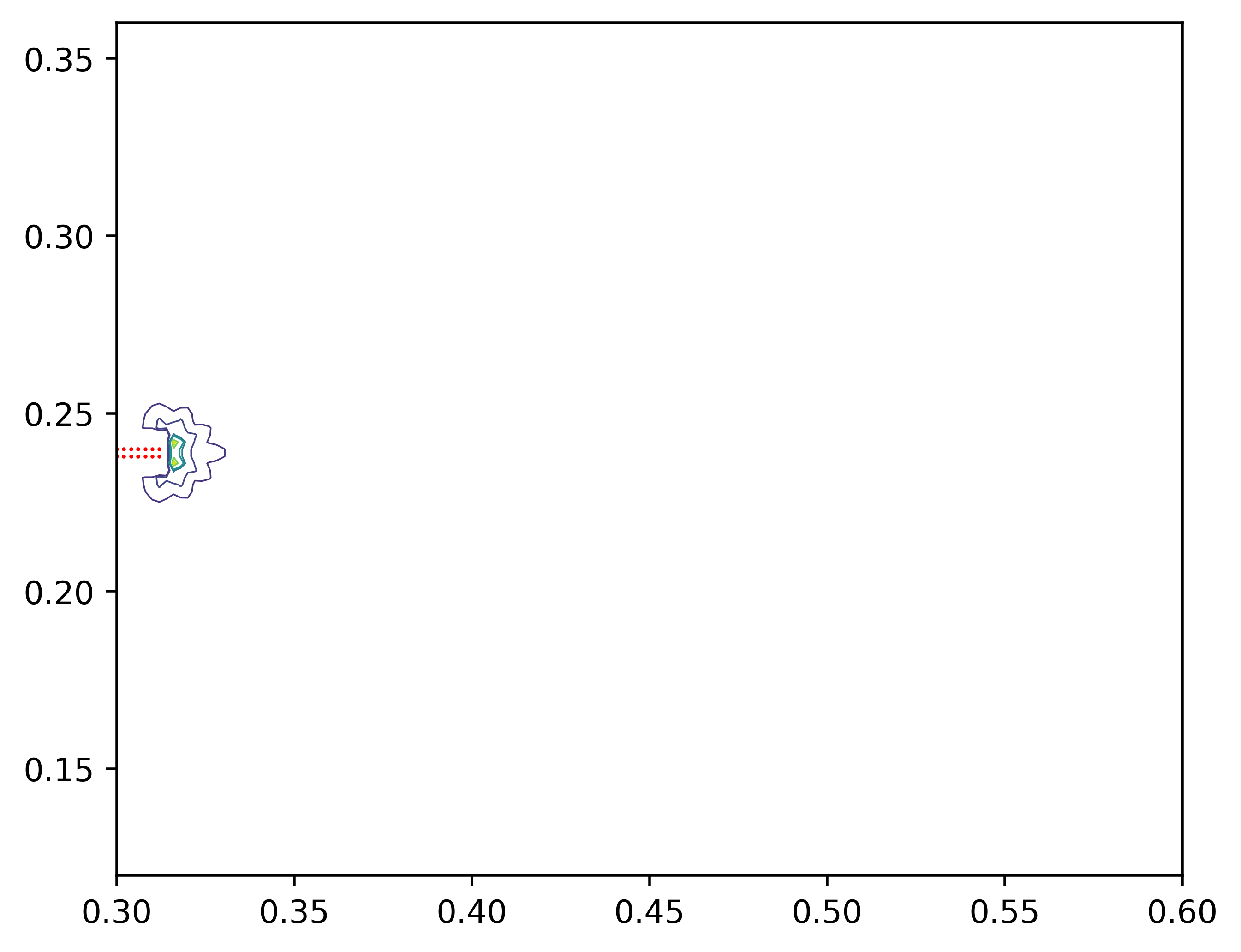}
    \includegraphics[width=0.49 \linewidth]{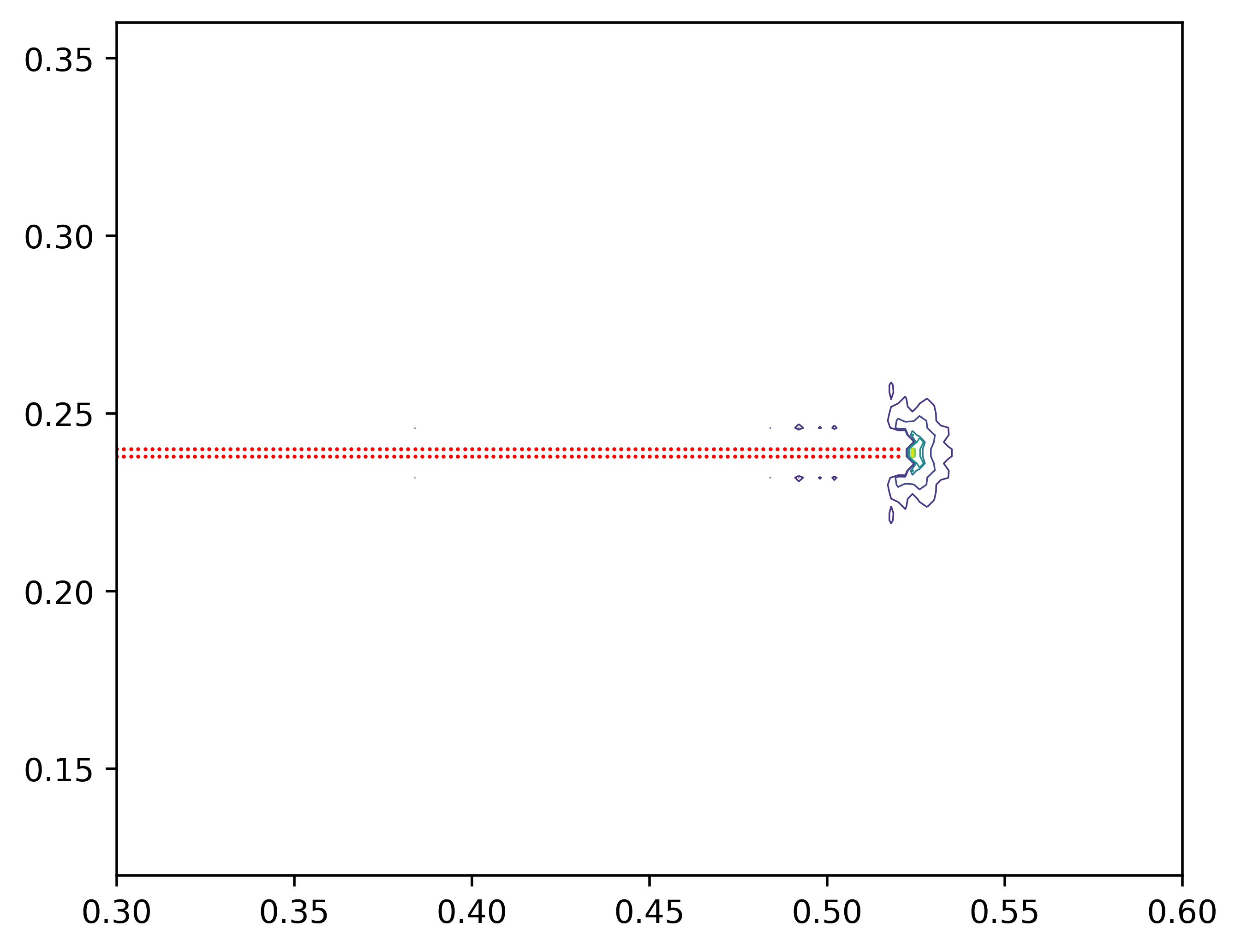}
    
    \includegraphics[width=0.49 \linewidth]{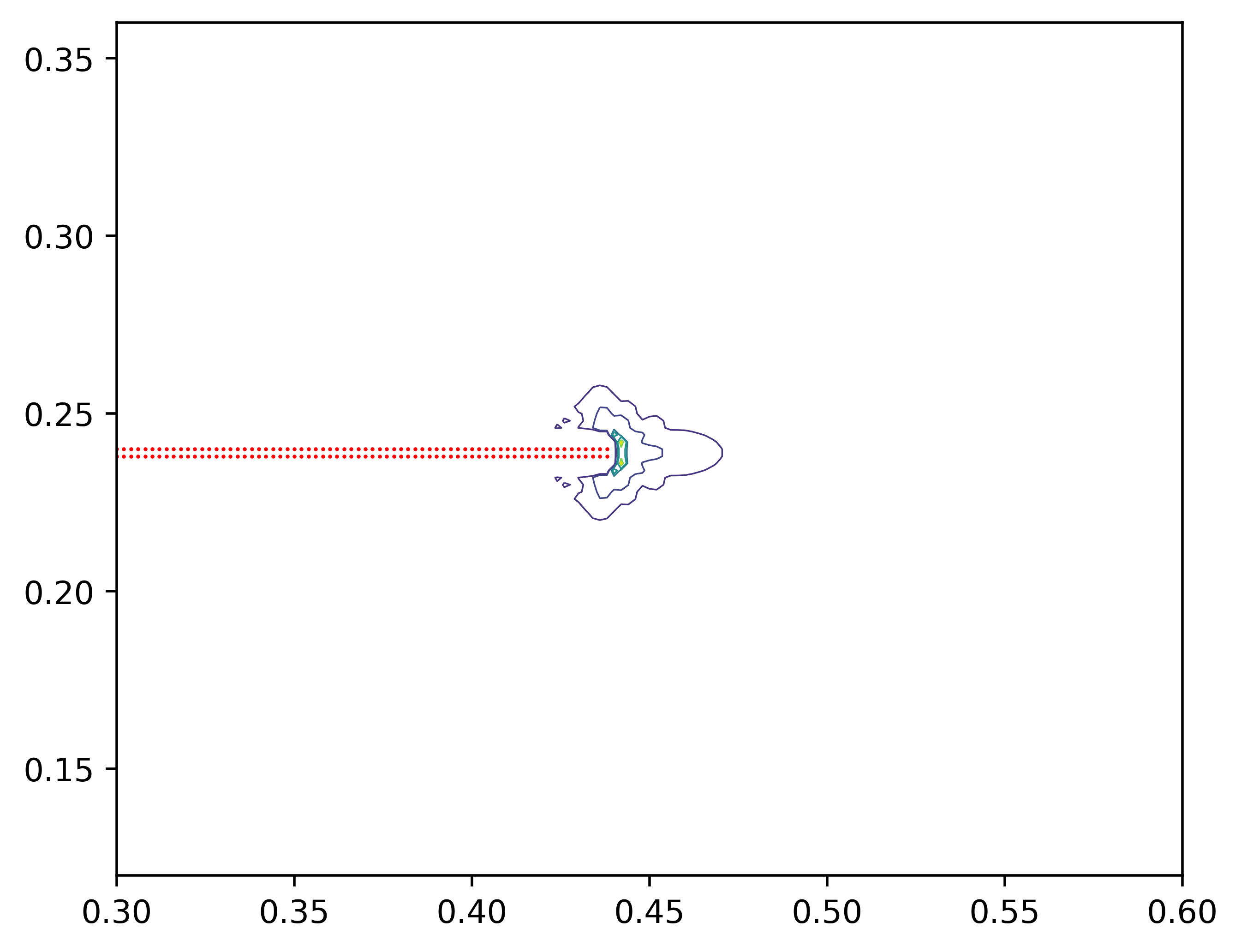}
    \includegraphics[width=0.49 \linewidth]{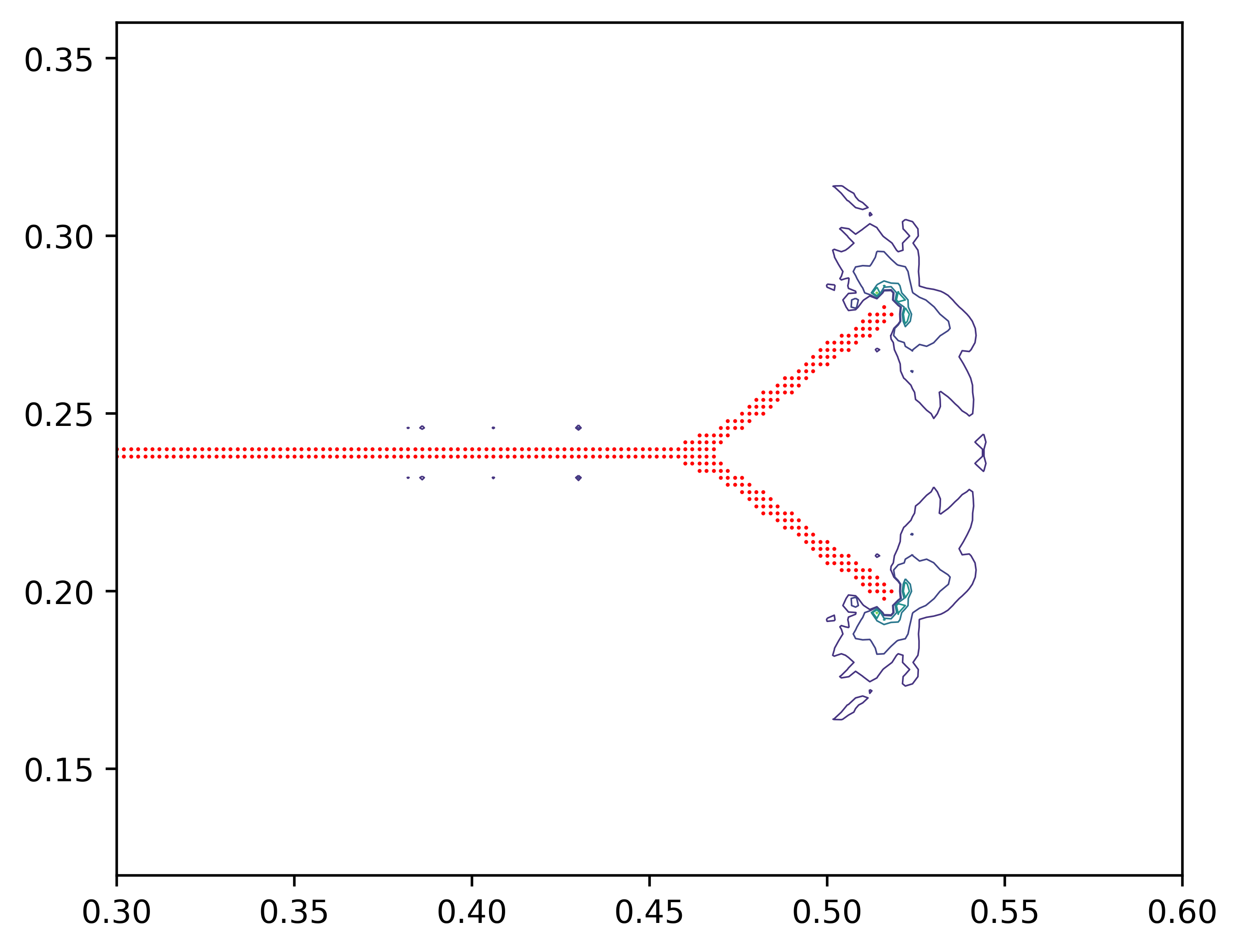}
    
    \caption{The contour lines of strain concentration $Z^\epsilon(\xx,\uu)$ in the intact material are shown, i.e. for points with $0 \le Z \le 1$. For the straight crack, snapshots at time t = 450, 700 $\mu$s are shown. For the bifurcating crack, we show the snapshots at time $t = 500, 600 \mu$s. The red color represents the crack path.}
    \label{fig:contour-z}
\end{figure}


%
%

\bibliographystyle{abbrv}
\bibliography{references}   

\end{document}